\def\dis{\displaystyle}
\def\ep{\varepsilon}
\def\p{\partial}
\def\Om{\Omega}
\def\om{\omega}
\def\R{\mathbb{R}}
\def\N{\mathbb{N}}
\def\U{{\mathcal U}}
\def\A{{\mathcal A}}
\def\Rbar{{\overline\R}}
\def\cp{\mathrm{cap}\,}
\newcommand{\Hc}{\mathcal H}
\newcommand{\sm}{\setminus}
\newcommand{\Lin}{{{\mathcal L}}}
\newcommand{\g}{\gamma}
\newcommand{\vphi}{\varphi}
\newcommand{\lra}{\longrightarrow}
\newcommand{\Lra}{\Longrightarrow}
\newcommand{\sr}{\stackrel}
\newcommand{\nif}{{n \rightarrow \infty}}
\newtheorem{theo}{Theorem}[section]
\newtheorem{prop}[theo]{Proposition}
\newtheorem{lem}[theo]{Lemma}
\newtheorem{defi}[theo]{Definition}
\newtheorem{rem}[theo]{Remark}
\title{Minimization of $\lambda_2(\Omega)$ with a perimeter constraint}
\author{Dorin Bucur\\
Laboratoire de Math\'ematiques (LAMA)\\ 
UMR 5127, Universit\'e de Savoie\\
Campus Scientifique 73 376 Le-Bourget-Du-Lac, France \\
email: dorin.bucur@univ-savoie.fr
 \and 
Giuseppe Buttazzo\\
Dipartimento di Matematica\\
Universita di Pisa, Largo B.Pontecorvo 5\\
56127 Pisa, Italy \\
{buttazzo@dm.unipi.it}
\and
Antoine Henrot\\ Institut \'Elie Cartan Nancy\\ 
UMR 7502, Nancy Universit\'e - CNRS - INRIA\\
B.P. 239 54506 Vandoeuvre les Nancy Cedex, France\\
email: henrot@iecn.u-nancy.fr}
\begin{document}

\maketitle

\begin{abstract}
We study the problem of minimizing the second Dirichlet eigenvalue for the Laplacian operator among sets of given perimeter. In two dimensions, we prove that the optimum exists, is convex, regular, and its boundary contains exactly two points where the curvature vanishes. In $N$ dimensions, we prove a more general existence theorem for a class of functionals which is decreasing with respect to set inclusion and $\gamma$ lower semicontinuous.
\end{abstract}

\vspace{0.6cm}
\textbf{Keywords}:
Dirichlet Laplacian, eigenvalues, perimeter constraint, iso\-pe\-rimetric problem\\

\vspace{0.3cm}
\textbf{AMS classification}:
49Q10, 49J45, 49R50, 35P15, 47A75\\

\newpage

\section{Introduction}
Let $\Omega$ be a bounded open set in $\mathbb{R}^N$ and let us denote by $0<\lambda_1(\Omega)\le\lambda_2(\Omega)\le\lambda_3(\Omega)\ldots$ its eigenvalues for the Laplacian operator with homogeneous Dirichlet boundary condition. Problems linking the shape of a domain to the sequence of its eigenvalues, or to some function of them, are among the most fascinating of mathematical analysis or differential geometry. In particular, problems of minimization of eigenvalues, or combination of eigenvalues, brought about many deep works since the early part of the twentieth century. Actually, this question appears first in the famous book of Lord Rayleigh ``The theory of sound''. Thanks to some explicit computations and "physical evidence", Lord Rayleigh conjectured that the disk should minimize the first Dirichlet eigenvalue $\lambda_1$ of the Laplacian among plane open sets of given area. This result has been proved later by Faber and Krahn using a rearrangement technique. Then, many other similar "isoperimetric problems" have been considered. For a survey on these questions, we refer to the papers \cite{Ash-Ben}, \cite{BBBS}, \cite{Pay2}, \cite{Yau2} and to the recent books \cite{H}, \cite{kebook}.

Usually, in these minimization problems, one works in the class of sets with a given measure. In this paper, on the contrary we choose to look at similar problems but with a constraint on the perimeter of the competing sets. Apart the mathematical own interest of this question, the reason which led us to consider this problem is the following. Studying the famous gap problem (originally considered in \cite{vdB}, see Section 7 in \cite{Ash-Ben} for a comprehensive bibliography on this problem), we were interested in minimizing $\lambda_2(\Om)-\lambda_1(\Om)$, and more generally $\lambda_2(\Om)- k\lambda_1(\Om)$, with $0\leq k\leq 1$, among (convex) open sets of given diameter. Looking at the limiting case $k=0$, we realized that the optimal set (which does exist) is a body of constant width. Since all bodies of constant width have the same perimeter in dimension two, we were naturally led to consider the problem of minimizing $\lambda_2(\Om)$ among sets of given perimeter. In particular, if the solution was a ball (or more generally a body of
constant width), it would give the answer to the previous problem. Unfortunately, as it is shown in Theorem \ref{theo3}, it is not the case! The minimizer that we are able to identify and characterize here (at least in two dimension) is a particular regular convex body, with two points on its boundary where the curvature vanishes. It is worth observing that the four following minimization problems for the second eigenvalue have different solutions:
\begin{itemize}
\item with a volume constraint: two identical disks (see \cite{Kra2} or
\cite{H}),
\item with a volume and a convexity constraint: a stadium-like set (see
\cite{He-Ou}),
\item with a perimeter constraint: the convex set described in this paper,
\item with a diameter constraint: we conjecture that the solution is a disk.
\end{itemize}
Let us remark that the same problems for the first eigenvalue all have the disk as the solution thanks to Faber-Krahn inequality and the classical isoperimetric inequality.

This paper is organized as follows: section \ref{sec2} is devoted to the complete study of the two-dimensional problem. We first prove the existence of a minimizer and its $C^\infty$ regularity. Then, we give some other qualitative and geometric properties of the minimizer.
For that purpose, we use boundary variation (the classical Hadamard's formulae) which leads to an overdetermined boundary value problem, with
$|grad u_2|^2$ proportional to the curvature of the boundary. We use this boundary condition to prove that the boundary of the optimal domain does not contain any arc of circle and segment and that the curvature of the boundary vanishes at exactly two points. In section \ref{sec3}, we consider the problem in higher dimension; this case is much more complicated since we cannot use the trick of convexification, and actually we conjecture that optimal domains are not convex (see section \ref{further} on open problems). We first give some preliminaries on capacity and $\gamma$-convergence (we refer to the book \cite{BB} for all details), then we consider a quite general minimization problem for a class of functionals decreasing with respect to set inclusion and which are $\gamma$ lower semicontinuous. We work here with measurable sets with bounded perimeter which are included in some fixed bounded domain $D$.
As Theorem \ref{equivrel} shows,  this relaxed problem is equivalent to the initial problem. For the second eigenvalue of the Laplacian we moreover prove that we can get rid of the assumption that the sets lie in some bounded subset of $\R^N$.

\section{The two-dimensional case}\label{sec2}
\subsection{Existence, regularity}\label{sec2.1}
We want to solve the minimization problem
\begin{equation}\label{2.1}
\min\{\lambda_2(\Omega),\;\Omega\subset \R^2,\; P(\Om)\leq c\}
\end{equation}
where $\lambda_2(\Omega)$ is the second eigenvalue of the Laplacian with Dirichlet boundary condition on the bounded open set $\Om$ and $P(\Om)$ denotes the perimeter (in the sense of De Giorgi) of $\Om$. The monotonicity of the eigenvalues of the Dirichlet-Laplacian with respect to the inclusion has two easy consequences:
\begin{enumerate}
\item If $\Om^*$ denotes the convex hull of $\Om$, since {\it in two dimensions} and for a connected set, $P(\Om^*)\le P(\Om)$, it is clear that we can restrict ourselves to look for minimizers in the class of {\it convex sets} with perimeter less or equal than $c$.
\item Obviously, it is equivalent to consider the constraint $P(\Om)\le c$ or $P(\Om)= c$.
\end{enumerate}
Of course, point 1 above easily implies existence (see Theorem \ref{theo1} below), but is no longer true in higher dimension which makes the existence proof much harder, see Theorem \ref{bh07}. For the regularity of optimal domains the following lemma will be used.

\begin{lem}\label{lem1}
If $\Om$ is a minimizer of problem \eqref{2.1}, then $\lambda_2(\Om)$ is simple.
\end{lem}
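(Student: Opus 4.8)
The plan is to argue by contradiction: assume $\Omega$ is a minimizer for which $\lambda_2(\Omega)=\lambda_3(\Omega)=:\mu$, and suppose for notational simplicity that the multiplicity equals two (the higher case being analogous). By the reduction to convex competitors noted above we may take $\Omega$ convex; in particular $\Omega$ is connected with Lipschitz boundary, so $\lambda_1(\Omega)$ is simple and $\lambda_1(\Omega)<\mu$, and the eigenfunctions lie in $H^2(\Omega)$ (convexity), so that their normal derivatives $\partial_n u$ are well defined in $L^2(\partial\Omega)$. Let $u_2,u_3$ be an $L^2$-orthonormal basis of the eigenspace of $\mu$. The goal is to produce a perturbation of $\Omega$ keeping the perimeter $\le c$ while strictly lowering $\lambda_2$, contradicting minimality.

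First I would invoke the shape-derivative formula for a multiple eigenvalue. For a vector field $V$ and the deformed domains $\Omega_t=(\mathrm{Id}+tV)(\Omega)$, the two eigenvalues near $\mu$ are not differentiable but admit one-sided derivatives at $t=0^+$ equal to the eigenvalues $\nu_1\le\nu_2$ of the symmetric matrix
$$A(\phi)=\Bigl(-\int_{\partial\Omega}\partial_n u_i\,\partial_n u_j\,\phi\,d\sigma\Bigr)_{i,j\in\{2,3\}},\qquad \phi:=V\cdot n.$$
Since $\lambda_1<\mu$, for small $t>0$ one has $\lambda_2(\Omega_t)=\mu+t\,\nu_1+o(t)$. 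In parallel, the first variation of the perimeter is the linear functional $\phi\mapsto\int_{\partial\Omega}H\phi\,d\sigma$, where $H\ge0$ is the (generalized) mean curvature of $\Omega$. Hence it suffices to find $\phi$ with $\int_{\partial\Omega}H\phi\,d\sigma=0$ and $\nu_1<0$: the first condition gives $P(\Omega_t)=c+O(t^2)$, which a dilation by a factor $1-O(t^2)$ restores to $\le c$ while perturbing $\lambda_2$ only at order $t^2$, and the second forces $\lambda_2(\Omega_t)<\mu$ at first order.

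Now $\nu_1<0$ is equivalent to $A(\phi)$ failing to be positive semidefinite, i.e. to the existence of a unit combination $u=\alpha u_2+\beta u_3$ with $\int_{\partial\Omega}(\partial_n u)^2\,\phi\,d\sigma>0$. Thus the construction fails only if, for every $\phi$ orthogonal to $H$ in $L^2(\partial\Omega)$ and every eigenfunction $u$, this integral vanishes, that is, if $(\partial_n u)^2$ is a scalar multiple of $H$ for each $u$ in the eigenspace. Testing against $u_2$, $u_3$ and $u_2+u_3$ gives constants with $(\partial_n u_2)^2=c_1H$, $(\partial_n u_3)^2=c_2H$ and $\partial_n u_2\,\partial_n u_3=c_4H$ on $\partial\Omega$; comparing $(\partial_n u_2\,\partial_n u_3)^2=(\partial_n u_2)^2(\partial_n u_3)^2$ yields $c_4^2=c_1c_2$, so $\partial_n u_2$ and $\partial_n u_3$ are proportional on $\{H>0\}$. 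Then a suitable combination $u_2-ku_3$ is an eigenfunction with vanishing Cauchy data on a boundary portion of positive measure, and unique continuation forces $u_2-ku_3\equiv0$, contradicting linear independence. This closes the argument.

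The step I expect to be most delicate is making these two variational identities rigorous at the regularity available \emph{before} the $C^\infty$ smoothness of $\partial\Omega$ is known: for a general convex body the curvature $H$ is only a measure, so ``$\int H\phi$'' must be read as the distributional first variation of the perimeter, and $\partial_n u$ are merely $L^2$ traces. I would therefore localize the fields $V$ to the part of $\partial\Omega$ where the convex boundary is twice differentiable (Alexandrov) and $H$ is a genuine function, perform the orthogonality construction there, and treat the rigidity case on $\{H>0\}$, whose nonemptiness is guaranteed by convexity.
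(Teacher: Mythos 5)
Your proposal follows the same core strategy as the paper's proof: perturb the putative double eigenvalue through the $2\times2$ matrix of shape derivatives $\bigl(-\int_{\partial\Omega}\partial_n u_i\,\partial_n u_j\,\phi\,d\sigma\bigr)_{i,j}$, show that optimality forces this matrix to degenerate for every admissible $\phi$, and dispose of the degenerate case by a rigidity-plus-unique-continuation argument. The two halves are implemented differently, though, and the comparison is instructive. For the constraint, you restrict to $\phi$ annihilating the first variation of the perimeter and repair the resulting $O(t^2)$ perimeter defect by a dilation; the paper instead takes $V\cdot n=+\varphi$ near a point $A$ and $-\varphi$ near a point $B$, so that the matrix including the Lagrange term becomes $\mathcal{M}_A-\mathcal{M}_B$, which changes sign when $A$ and $B$ are exchanged; hence either some choice produces a negative eigenvalue or $\mathcal{M}_A\equiv\mathcal{M}_B$ for all $A,B,\varphi$. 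The paper's antisymmetrization has the advantage of never needing the representation $dP(\phi)=\int_{\partial\Omega}H\phi\,d\sigma$ (the perimeter contribution sits only on the diagonal and cancels in the differences actually used), which is exactly the delicacy you flag at the end: for a convex body that is not yet known to be smooth the curvature is only a measure, and ``localizing $V$ to the Alexandrov points'' is awkward because that set need not be open. In the rigidity case the paper concludes that $(\partial_n u_2)^2$ is constant a.e.\ on $\partial\Omega$, observes that $\partial_n u_2$ changes sign because the nodal line meets the boundary at exactly two points, and rules out a two-valued $H^{1/2}$ trace, reserving Holmgren for the residual case where the constant is zero. Your route via $c_4^2=c_1c_2$ and proportionality is also viable, but as written it invokes unique continuation from Cauchy data vanishing only on a boundary set of positive measure, which in a merely Lipschitz (convex) domain is a substantially harder theorem than Holmgren. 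Fortunately this is repaired inside your own scheme: since $(\partial_n u_i)^2=c_iH$ forces both normal derivatives to vanish a.e.\ on $\{H=0\}$, the combination $v=u_3-(c_4/c_1)u_2$ (or $v=u_2$ if $c_1=0$) satisfies $(\partial_n v)^2=(c_2-c_4^2/c_1)\,H=0$ a.e.\ on \emph{all} of $\partial\Omega$, so $v$ has vanishing Cauchy data on the whole boundary and the standard extension-by-zero/Holmgren argument on any smooth sub-arc applies. With these two points of care your argument is sound.
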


\begin{proof}
The idea of the proof is to show that a double
eigenvalue would split under boundary perturbation of the domain, with one
of the eigenvalues going down. A very similar result is proved in \cite[Theorem 2.5.10]{H}. The new difficulties here are the perimeter constraint (instead of the volume) and the fact that the domain $\Omega$ is convex, but not necessarily regular. Nevertheless, we know that any eigenfunction of a convex domain is in the Sobolev space $H^2(\Om)$, see \cite{Gri}. Let us assume, for a contradiction, that $\lambda_2(\Omega)$ is not simple, then it is double because $\Omega$ is a convex domain in the plane, see \cite{lin}. Let us recall the result of derivability of eigenvalues in the multiple case (see \cite{cox3} or \cite{Rou}). Assume that the domain $\Omega$ is modified by a regular vector field $x\mapsto x+tV(x)$. We will denote by $\Om_t$ the image of $\Om$ by this transformation. Of course, $\Omega_t$ may be not convex but we have actually no convexity constraint (since convexity come for free) and this has no consequence on the differentiability of $t\mapsto \lambda_2(\Omega_t)$. Let us denote by $u_2,u_3$ two orthonormal eigenfunctions associated to $\lambda_2,\lambda_3$. Then, the first variation of $\lambda_2(\Om_t),\lambda_3(\Om_t)$ are the repeated eigenvalues of the $2\times 2$ matrix
\begin{equation}\label{2.35}
\mathcal{M}=\left(
\begin{array}{cc}
-\int_{\p\Om}\big(\frac{\partial
u_2}{\partial n}\big)^ 2\,V.n\,d\sigma&
-\int_{\p\Om}\big(\frac{\partial u_2}{\partial n}\,\frac{\partial
u_3}{\partial n}\big)\,V.n\,d\sigma\\
\ &\ \\
-\int_{\p\Om}\big(\frac{\partial u_2}{\partial n}\,\frac{\partial
u_3}{\partial n}\big)\,V.n\,d\sigma&
-\int_{\p\Om}\big(\frac{\partial u_3}{\partial n}\big)^2\,V.n\,d\sigma
\end{array}
\right)\;.
\end{equation}
Now, let us introduce the Lagrangian $L(\Om)=\lambda_2(\Om)+\mu P(\Om)$. As we will see in the proof of Theorem \ref{theo1}, the perimeter is differentiable and the derivative is a linear form in $V.n$ supported on $\p\Om$ (see e.g. \cite[Corollary 5.4.16]{HP}). We will denote by $\langle dP_{\p\Om},V.n\rangle$ this derivative. So the Lagrangian $L(\Om_t)$ has a derivative which is the smallest eigenvalue of the matrix $\mathcal{M}+\mu\langle dP_{\p\Om},V.n\rangle I$ where $I$ is the identity matrix. Therefore, to reach a contradiction (with the optimality of $\Om$), it suffices to prove that one can always find a deformation field $V$ such that the smallest eigenvalue of this matrix is negative. Let us consider two points $A$ and $B$ on $\p\Om$ and two small neighborhoods $\gamma_A$ and $\gamma_B$ of these two points of same length, say $2\delta$. Let us choose any regular function $\varphi(s)$ defined on $(-\delta, +\delta)$ (vanishing at the extremities of the interval) and a deformation field $V$ such that
$$V.n=+\varphi\mbox{ on }\gamma_A,\quad V.n=-\varphi\mbox{ on }\gamma_B,
\quad V.n=0\mbox{ elsewhere}\,.$$
Then, the matrix $\mathcal{M}+\mu\langle dP_{\p\Om},V.n\rangle I$ splits into two matrices $\mathcal{M}_A-\mathcal{M}_B$ where $\mathcal{M}_A$ is defined by (and a similar formula for $\mathcal{M}_B$):
\begin{equation}\label{2.36}
\mathcal{M}_A=\left(
\begin{array}{ll}
\langle dP_{\gamma_A},\varphi\rangle-\int_{\gamma_A}\big(\frac{\partial
u_2}{\partial n}\big)^ 2\!\varphi\,d\sigma&
-\int_{\gamma_A}\big(\frac{\partial u_2}{\partial n}\,\frac{\partial
u_3}{\partial n}\big)\,\varphi\,d\sigma\\
\ &\ \\
-\int_{\gamma_A}\big(\frac{\partial u_2}{\partial n}\,\frac{\partial
u_3}{\partial n}\big)\,\varphi\,d\sigma&
\langle dP_{\gamma_A},\varphi\rangle-\int_{\gamma_A}\big(\frac{\partial u_3}{\partial n}\big)^
2\!\varphi\,d\sigma
\end{array}
\right)\;.
\end{equation}
In particular, it is clear that the exchange of $A$ and $B$ replaces the matrix $\mathcal{M}_A-\mathcal{M}_B$ by its opposite. Therefore, the only case where one would be unable to choose two points $A,B$ and a deformation $\varphi$ such that the matrix has a negative eigenvalue is if $\mathcal{M}_A-\mathcal{M}_B$ is identically zero for any $\varphi$. But this implies, in particular
\begin{equation}\label{2.37}
\int_{\gamma_A}\frac{\partial
u_2}{\partial n}\,\frac{\partial
u_3}{\partial n}\,\varphi\,d\sigma
=\int_{\gamma_B}\frac{\partial
u_2}{\partial n}\,\frac{\partial
u_3}{\partial n}\,\varphi\,d\sigma
\end{equation}
and
\begin{equation}\label{2.37.2}
\int_{\gamma_A}\Big[\Big(\frac{\partial
u_2}{\partial n}\Big)^2-\Big(\frac{\partial
u_3}{\partial n}\Big)^2\Big]\,\varphi\,d\sigma
=\int_{\gamma_B}\Big[\Big(\frac{\partial
u_2}{\partial n}\Big)^2-\Big(\frac{\partial
u_3}{\partial n}\Big)^2\Big]\,\varphi\,d\sigma
\end{equation}
for any regular $\varphi$ and any points $A$ and $B$ on $\p\Om$. This implies that the product $\left(\frac{\partial u_2}{\partial n}\,\frac{\partial u_3}{\partial n}\right)^2$ and the difference $\big(\frac{\partial u_2}{\partial n}\big)^2-\big(\frac{\partial u_3}{\partial n}\big)^2$ should be constant a.e. on $\p\Om$. As a consequence $\big(\frac{\partial u_2}{\partial n}\big)^2$ has to be constant. Since the nodal line of the second eigenfunction touches the boundary in two points
(see \cite{Mel} or \cite{Ale}), $\frac{\partial u_2}{\partial n}$ has to change sign. So we get a function belonging to $H^{1/2}(\partial\Om)$ taking values $c$ and $-c$ on sets of positive measure, which is absurd, unless $c=0$. This last issue is impossible by the Holmgren uniqueness theorem.
\end{proof}

We are now in a position to prove the existence and regularity of optimal domains for problem \eqref{2.1}.

\begin{theo}\label{theo1}
There exists a minimizer $\Om$ for problem \eqref{2.1} and $\Om$ is of class $C^\infty$.
\end{theo}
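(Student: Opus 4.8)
The plan is to treat the two assertions separately: existence follows quickly from the convexification remark and a compactness argument, whereas the $C^\infty$ regularity is the substantial part and rests on the first-order optimality condition together with an elliptic bootstrap.

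For existence I would take a minimizing sequence $(\Om_n)$ for \eqref{2.1} and, invoking point 1, replace each $\Om_n$ by its convex hull: this does not increase the perimeter and, by domain monotonicity of $\lambda_2$, does not increase $\lambda_2$, so I may assume every $\Om_n$ is convex with $P(\Om_n)=c$. For a planar convex set the diameter is at most $P/2$, so after a translation all the $\Om_n$ lie in a fixed ball, and by the Blaschke selection theorem a subsequence converges in Hausdorff distance to a convex body $K$. The infimum in \eqref{2.1} is finite (a disk of perimeter $c$ is a competitor), whereas $\lambda_2$ blows up along any sequence of convex sets degenerating to a segment; hence $K$ has nonempty interior. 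On convex sets with nonempty interior Hausdorff convergence yields $\gamma$-convergence, hence convergence of $\lambda_2$, together with continuity of the perimeter, so $P(K)\le c$ and $\lambda_2(K)\le\liminf_n\lambda_2(\Om_n)$, and $\Om:=\mathrm{int}\,K$ is a minimizer.

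Next I record the Euler--Lagrange condition. By Lemma \ref{lem1} the eigenvalue $\lambda_2(\Om)$ is simple, so $t\mapsto\lambda_2(\Om_t)$ is differentiable and Hadamard's formula gives $\lambda_2'(\Om)(V)=-\int_{\p\Om}(\p u_2/\p n)^2\,V\!\cdot\!n\,d\sigma$, while $P'(\Om)(V)=\int_{\p\Om}H\,V\!\cdot\!n\,d\sigma$ with $H$ the curvature. Crucially, as observed in the proof of Lemma \ref{lem1}, convexity ``comes for free'': the minimizer of \eqref{2.1} over all admissible sets is convex, so $\Om$ is stationary with respect to arbitrary, not necessarily convexity-preserving, deformations. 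Introducing a Lagrange multiplier $\mu$ for the active constraint $P(\Om)=c$, stationarity of $L(\Om)=\lambda_2(\Om)+\mu P(\Om)$ for every $V$ forces the overdetermined boundary condition
\begin{equation*}
\Big(\frac{\p u_2}{\p n}\Big)^2=\mu H\qquad\text{on }\p\Om,
\end{equation*}
so $|\nabla u_2|^2$ is proportional to the curvature; since $\Om$ is convex, $u_2\in H^2(\Om)$ and all these boundary integrals are meaningful. The regularity of $\p\Om$ then follows by bootstrapping. The initial step upgrades convexity to $C^1$: the curvature of a convex body is a nonnegative measure on $\p\Om$, and the identity above forces it to equal $\mu^{-1}(\p u_2/\p n)^2\,d\sigma$, which is absolutely continuous because $\p u_2/\p n\in L^2(\p\Om)$; an atom of the curvature measure, i.e. a corner, is thus excluded and $\p\Om$ is $C^1$. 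Once $\p\Om\in C^{1,\alpha}$ the scheme is self-improving: Schauder estimates for $-\Delta u_2=\lambda_2u_2$ with $u_2|_{\p\Om}=0$ give $u_2\in C^{k,\alpha}(\ov\Om)$ whenever $\p\Om\in C^{k,\alpha}$, hence $\p u_2/\p n\in C^{k-1,\alpha}(\p\Om)$, so $H\in C^{k-1,\alpha}$ and integrating twice yields $\p\Om\in C^{k+1,\alpha}$; iterating gains one derivative at each stage and produces $\p\Om\in C^\infty$.

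The main obstacle is the passage from the nonsmooth convex setting to the first Hölder level $C^{1,\alpha}$ that launches the Schauder iteration. The a priori information $u_2\in H^2(\Om)$ yields only $\p u_2/\p n\in H^{1/2}(\p\Om)$, which is borderline for the boundedness of the curvature density needed to conclude $\p\Om\in C^{2,\alpha}$ in one stroke; the argument must therefore be staged, first obtaining absolute continuity of the curvature measure (ruling out corners, giving $C^1$) and then the continuity, and eventually Hölder continuity, of its density, before the clean elliptic bootstrap can take over. The second delicate point is the rigorous justification of Hadamard's formulae and of the precise measure-theoretic meaning of the Euler--Lagrange condition on a merely convex domain, which is exactly where the regularity $u_2\in H^2(\Om)$ is used.
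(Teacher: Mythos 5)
Your existence argument is essentially the paper's: convexification, Blaschke selection, non-degeneracy of the limit, $\gamma$-convergence of convex sets under Hausdorff convergence, and semicontinuity (indeed continuity) of the perimeter. That part is fine.

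The regularity part has the right skeleton (simplicity of $\lambda_2$, Hadamard formulae, Euler--Lagrange condition $|\nabla u_2|^2=\mu\,\mathcal C$, elliptic bootstrap), but it contains a genuine gap which you yourself flag and then do not close: the passage from a merely convex boundary to the first H\"older level $C^{1,\alpha}$ that is needed to launch the (intermediate) Schauder iteration. Your proposed initial step only uses $\partial u_2/\partial n\in L^2(\partial\Om)$, i.e. $|\nabla u_2|^2\in L^1(\partial\Om)$; this makes the curvature measure atom-free and gives $\partial\Om\in C^1$ (equivalently, tangent angle in $W^{1,1}$), but $W^{1,1}$ regularity of the tangent angle gives no H\"older modulus, so the Schauder machinery cannot start. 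You acknowledge this as ``the main obstacle'' and say the argument ``must be staged'', but you never supply the stages, and without them the proof is incomplete. The missing idea in the paper is a quantitative gain of integrability: since $\Om$ is convex, $u_2\in H^2(\Om)$, so the trace of $\nabla u_2$ lies in $H^{1/2}(\partial\Om)$, and the one-dimensional Sobolev embedding $H^{1/2}(\partial\Om)\hookrightarrow L^p(\partial\Om)$ for every $p<\infty$ gives $|\nabla u_2|^2\in L^p(\partial\Om)$ for every $p<\infty$. Writing $\partial\Om$ locally as a graph $y=h(x)$, the Euler--Lagrange equation becomes the distributional o.d.e. $-\bigl(h'/\sqrt{1+h'^2}\bigr)'=\mu|\nabla u_2(x,h(x))|^2$, whence $h'/\sqrt{1+h'^2}\in W^{1,p}\hookrightarrow C^{0,\alpha}$ for every $\alpha<1$, and since $h'$ is bounded this yields $h\in C^{1,\alpha}$. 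Only then does the Gilbarg--H\"ormander intermediate Schauder estimate (which you invoke implicitly when claiming $u_2\in C^{1,\alpha}(\overline\Om)$ from $\partial\Om\in C^{1,\alpha}$; classical Schauder theory would require more boundary regularity) start the self-improving loop you describe. With that step inserted, your bootstrap goes through as in the paper.
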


\begin{proof}
To show the existence of a solution we use the direct method of calculus of variations. Let $\Om_n$ be a minimizing sequence that, according to point 1 above, we can assume made by convex sets. Moreover, $\Om_n$ is a bounded sequence because of the perimeter constraint. Therefore, there exists a convex domain $\Om$ and a subsequence still denoted by $\Om_n$ such that:
\begin{itemize}
\item $\Om_n$ converges to $\Om$ for the Hausdorff metric and for the $L^1$ convergence of characteristic functions (see e.g. \cite[Theorem 2.4.10]{HP}); since $\Om_n$ and $\Om$ are convex this implies that $\Om_n\to\Om$ in the $\gamma$-convergence;
\item $P(\Om)\le c$ (because of the lower semicontinuity of the perimeter for the $L^1$ convergence of characteristic functions, see \cite[Proposition 2.3.6]{HP});
\item $\lambda_2(\Om_n)\to\lambda_2(\Om)$ (continuity of the eigenvalues for the $\gamma$-convergence, see \cite[Proposition 2.4.6]{BB} or \cite[Theorem 2.3.17]{H}, see also Section \ref{seccap} below).
\end{itemize}
Therefore, $\Om$ is a solution of problem \eqref{2.1}.

We notice that the limit $\Omega$ is a ``true'' domain (i.e. it is not the empty set); indeed any degenerating sequence, a sequence shrinking to a segment for instance, converges to the empty set, thus the second eigenvalue blows to infinity.

We go on with the proof of regularity, which is classical, see e.g. \cite{CL}. We refer also to \cite{Bri}, \cite{BHP} and \cite{br-la} for similar results in a more complicated context. Let us consider (locally) the boundary of $\p\Om$ as the graph of a (concave) function $h(x)$, with $x\in (-a,a)$. We make a perturbation of $\p\Om$ using a regular function $\psi$ compactly supported in $(-a,a)$, i.e. we look at $\Om_t$ whose boundary is $h(x)+t\psi(x)$. The function $t\mapsto P(\Om_t)$ is differentiable at $t=0$ (see \cite{Giu} or \cite{HP}) and its derivative $dP(\Om,\psi)$ at $t=0$ is given by:
\begin{equation}\label{2.3}
dP(\Om,\psi):=\int_{-a}^{+a}\frac{h'(x)\psi'(x)\,dx}{\sqrt{1+{h'(x)}^2}}\,.
\end{equation}
In the same way, thanks to Lemma \ref{lem1}, the function $t\mapsto\lambda_2(\Om_t)$ is differentiable (see \cite[Theorem 5.7.1]{HP}) and since the second (normalized) eigenfunction $u_2$ belongs to the Sobolev space $H^2(\Om)$ (due to the convexity of $\Om$, see \cite[Theorem 3.2.1.2]{Gri}), its derivative $d\lambda_2(\Om,\psi)$ at $t=0$ is
\begin{equation}\label{2.4}
d\lambda_2(\Om,\psi):=-\int_{-a}^{+a}|\nabla u_2(x,h(x))|^2\psi(x)\,dx.
\end{equation}
The optimality of $\Om$ implies that there exists a Lagrange multiplier $\mu$ such that, for any $\psi\in C_0^\infty(-a,a)$
$$\mu d\lambda_2(\Om,\psi)+dP(\Om,\psi)=0$$
which implies, thanks to \eqref{2.3} and \eqref{2.4}, that $h$ is a solution (in the sense of distributions) of the o.d.e.:
\begin{equation}\label{2.5}
-\bigg(\frac{h'(x)}{\sqrt{1+{h'(x)}^2}}\bigg)'=\mu|\nabla u_2(x,h(x))|^2\,.
\end{equation}
Since $u_2\in H^2(\Om)$, its first derivatives $\frac{\p u_2}{\p x}$ and $\frac{\p u_2}{\p y}$ have a trace on $\p\Om$ which belong to $H^{1/2}(\p\Om)$. Now, the Sobolev embedding in one dimension $H^{1/2}(\p\Om)\hookrightarrow L^p(\p\Om)$ for any $p>1$ shows that $x\mapsto|\nabla u_2(x,h(x))|^2$ is in $L^p(-a,a)$ for any $p>1$. Therefore, according to \eqref{2.5}, the function $h'/\sqrt{1+{h'}^2}$ is in $W^{1,p}(-a,a)$ for any $p>1$ (recall that $h'$ is bounded because $\Om$ is convex), so it belongs to some H\"older space $C^{0,\alpha}([-a,a])$ (for any $\alpha<1$, according to Morrey-Sobolev
embedding). Since $h'$ is bounded, it follows immediately that $h$ belongs to $C^{1,\alpha}([-a,a])$. Now, we come back to the partial differential equation and use an intermediate Schauder regularity result (see \cite{GH} or the remark after Lemma 6.18 in \cite{Gil-Tru}) to claim that if $\p\Om$ is of class $C^{1,\alpha}$, then the eigenfunction $u_2$ is $C^{1,\alpha}(\overline{\Om})$ and $|\nabla u_2|^2$ is $C^{0,\alpha}$. Then, looking again to the o.d.e. \eqref{2.5} and using the same kind of Schauder's regularity result yields that $h\in C^{2,\alpha}$. We iterate the process, thanks to a classical bootstrap argument, to conclude that $h$ is $C^\infty$.
\end{proof}

\begin{rem}\rm
Working harder, it seems possible to prove analyticity of the boundary. It would also give another proof of points 1 and 2 of Theorem \ref{theo3} below.
\end{rem}

\subsection{Qualitative properties}
Since we know that the minimizers are of class $C^\infty$, we can now write rigorously the optimality condition. Under variations of the boundary (replace $\Om$ by $\Om_t=(I +tV)(\Om)$), the shape derivative of the perimeter is given by (see Section \ref{sec2.1} and \cite[Corollary 5.4.16]{HP})
$$dP(\Om;V)=\int_{\p\Om}\mathcal{C}\,V.n\,d\sigma$$
where $\mathcal{C}$ is the curvature of the boundary and $n$ the exterior normal vector. Using the expression of the derivative of the eigenvalue given in \eqref{2.4} (see also \cite[Theorem 5.7.1]{HP}), the proportionality of these two derivatives through some Lagrange multiplier yields the existence of a constant $\mu$ such that
\begin{equation}\label{4.1}
|\nabla u_2|^2=\mu\mathcal{C}\quad\mbox{on }\p\Om\,.
\end{equation}
Setting $X=(x_1,x_2)$, multiplying the equality in \eqref{4.1} by $X.n$ and integrating on $\p\Om$ yields, thanks to Gauss formulae $\int_{\p\Om}\mathcal{C}\,X.n\,d\sigma=P(\Om)$, and a classical application of the Rellich formulae $\int_{\p\Om}
|\nabla u_2|^2\,X.n\,d\sigma=2\lambda_2(\Om)$, the value of the Lagrange multiplier. So, we have proved:

\begin{prop}
Any minimizer $\Om$ satisfies
\begin{equation}\label{4.2}
|\nabla u_2(x)|^2=\frac{2\lambda_2(\Om)}{P(\Om)}\,\mathcal{C}(x)\,,\quad x\in\p\Om
\end{equation}
where $\mathcal{C}(x)$ is the curvature at point $x$.
\end{prop}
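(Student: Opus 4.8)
The plan is to reach \eqref{4.2} in two moves: first establish the pointwise optimality relation \eqref{4.1}, and then evaluate the Lagrange multiplier $\mu$ by testing \eqref{4.1} against the position field $X=(x_1,x_2)$ and using two global identities.

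First I would record the first-order condition. By Theorem \ref{theo1} the boundary is $C^\infty$, so every shape derivative is classical, and by Lemma \ref{lem1} the eigenvalue $\lambda_2(\Om)$ is simple, which is precisely what makes $t\mapsto\lambda_2(\Om_t)$ differentiable with the clean Hadamard formula \eqref{2.4}; the $H^2$ regularity of $u_2$ (convexity of $\Om$) guarantees that $|\nabla u_2|^2$ has a boundary trace, so that $d\lambda_2(\Om;V)=-\int_{\p\Om}|\nabla u_2|^2\,V.n\,d\sigma$ is meaningful. Combined with $dP(\Om;V)=\int_{\p\Om}\mathcal{C}\,V.n\,d\sigma$, stationarity of $L(\Om)=\lambda_2(\Om)+\mu P(\Om)$ at the optimum gives $\int_{\p\Om}\big(-|\nabla u_2|^2+\mu\mathcal{C}\big)V.n\,d\sigma=0$ for every admissible $V$; since $V.n$ exhausts a dense subspace of $L^2(\p\Om)$, the fundamental lemma of the calculus of variations yields \eqref{4.1}, i.e. $|\nabla u_2|^2=\mu\mathcal{C}$ on $\p\Om$.

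Next I would compute $\mu$. Multiplying \eqref{4.1} by $X.n$ and integrating gives $\int_{\p\Om}|\nabla u_2|^2\,X.n\,d\sigma=\mu\int_{\p\Om}\mathcal{C}\,X.n\,d\sigma$. For the geometric factor I would take $V=X$ in $dP(\Om;V)=\int_{\p\Om}\mathcal{C}\,V.n\,d\sigma$ and compare with $\frac{d}{ds}P\big((1+s)\Om\big)\big|_{s=0}=P(\Om)$, valid because the perimeter is $1$-homogeneous in the plane; this gives the Gauss identity $\int_{\p\Om}\mathcal{C}\,X.n\,d\sigma=P(\Om)$. For the analytic factor I would establish the Rellich--Pohozaev identity $\int_{\p\Om}|\nabla u_2|^2\,X.n\,d\sigma=2\lambda_2(\Om)$ by integrating over $\Om$ the pointwise relation $\mathrm{div}\big[(X.\nabla u_2)\nabla u_2-\tfrac12|\nabla u_2|^2X\big]=(X.\nabla u_2)\Delta u_2+\tfrac{2-N}{2}|\nabla u_2|^2$ (whose last term drops when $N=2$), then inserting $-\Delta u_2=\lambda_2 u_2$, the normalization $\int_\Om u_2^2=1$, and the boundary facts $u_2=0$ and $\nabla u_2=\frac{\p u_2}{\p n}\,n$. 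Together these give $2\lambda_2(\Om)=\mu P(\Om)$, so $\mu=2\lambda_2(\Om)/P(\Om)$; substituting into \eqref{4.1} produces \eqref{4.2}. As a consistency check, this $\mu$ is positive, matching $|\nabla u_2|^2\ge0$ and $\mathcal{C}\ge0$ for a convex set.

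The algebra is routine; the real care sits in the Rellich--Pohozaev step. The hard part will be justifying the integrations by parts at the regularity available here: $u_2\in H^2(\Om)$ and $\p\Om\in C^\infty$ give $\nabla u_2$ an $H^{1/2}$ (hence $L^p$) boundary trace, but one must confirm that the boundary quantities $(X.\nabla u_2)\frac{\p u_2}{\p n}$ and $|\nabla u_2|^2\,X.n$ are integrable and that the divergence theorem applies in this Sobolev setting---cleanest either by approximating $\Om$ from inside by smooth domains and passing to the limit, or by citing the Pohozaev identity for $H^2$ solutions on smooth domains. I would also note that no star-shapedness is needed: both integral identities hold for any smooth bounded domain and the origin may be placed arbitrarily, though putting it inside $\Om$ makes $X.n\ge0$.
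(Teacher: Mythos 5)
Your proposal is correct and follows essentially the same route as the paper: derive the pointwise condition $|\nabla u_2|^2=\mu\,\mathcal C$ from the Lagrange stationarity of $\lambda_2+\mu P$ (using the $C^\infty$ regularity and simplicity already established), then identify $\mu$ by pairing with $X.n$ via the Gauss formula $\int_{\p\Om}\mathcal C\,X.n\,d\sigma=P(\Om)$ and the Rellich identity $\int_{\p\Om}|\nabla u_2|^2\,X.n\,d\sigma=2\lambda_2(\Om)$. The extra detail you supply on the Pohozaev computation and the homogeneity argument for the Gauss identity is a correct filling-in of steps the paper simply cites as classical.
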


As a consequence, we can state some qualitative properties of the optimal domains.

\begin{theo}\label{theo3}
An optimal domain satisfies:
\begin{enumerate}
\item Its boundary does not contain any segment.
\item Its boundary does not contain any arc of circle.
\item Its boundary contains exactly two points where the curvature vanishes.
\end{enumerate}
\end{theo}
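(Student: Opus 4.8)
The engine for all three statements is the optimality relation \eqref{4.2}, which on $\partial\Omega$ reads $|\nabla u_2|^2=\frac{2\lambda_2(\Omega)}{P(\Omega)}\,\mathcal{C}$. Since $u_2$ vanishes on $\partial\Omega$ its tangential derivative is zero there, so $|\nabla u_2|^2=(\partial u_2/\partial n)^2$ and the relation ties the normal derivative of $u_2$ directly to the curvature. Because $\Omega$ is $C^\infty$ (Theorem \ref{theo1}) and $-\Delta-\lambda_2$ has analytic coefficients, I will repeatedly invoke Holmgren's uniqueness theorem: a solution of $-\Delta w=\lambda_2 w$ whose Cauchy data vanish on a (non-characteristic, analytic) boundary arc must vanish near that arc, hence in all of $\Omega$ by unique continuation.

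For statements 1 and 2 I argue by contradiction, assuming an open arc $\Gamma\subset\partial\Omega$ is a segment or an arc of a circle; on such $\Gamma$ the curvature $\mathcal{C}$ is constant, so by \eqref{4.2} the quantity $\partial u_2/\partial n$ is constant on $\Gamma$. If $\Gamma$ is a segment then $\mathcal{C}\equiv0$, hence $\partial u_2/\partial n\equiv0$; together with $u_2|_\Gamma=0$ this is vanishing Cauchy data, and Holmgren forces $u_2\equiv0$, a contradiction. If $\Gamma$ is an arc of a circle of center $O$, then $\mathcal{C}$ is a nonzero constant and $\partial u_2/\partial n$ is a nonzero constant on $\Gamma$. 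The idea is to differentiate in the angular variable about $O$: the function $v:=\partial_\theta u_2$ (equivalently $v=(x-O)^\perp\cdot\nabla u_2$) again solves $-\Delta v=\lambda_2 v$, since the operator commutes with rotations about $O$. On $\Gamma$ one has $v=0$ (because $u_2\equiv0$ along $\Gamma$, so its angular derivative vanishes) and $\partial v/\partial n=\partial_\theta(\partial u_2/\partial n)=0$ (because $\partial u_2/\partial n$ is constant in $\theta$ along $\Gamma$); thus $v$ has vanishing Cauchy data on $\Gamma$. Holmgren then gives $v\equiv0$ in $\Omega$, i.e. $u_2$ is radial about $O$. But then the nodal set of $u_2$ is a union of circles, which cannot meet $\partial\Omega$ in exactly two points as the second eigenfunction of a convex set must (see \cite{Mel} or \cite{Ale}); equivalently, a radial $u_2$ forces $\Omega$ to be a disk and $\lambda_2(\Omega)$ to be double, contradicting Lemma \ref{lem1}. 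Either way we reach a contradiction.

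For statement 3, convexity gives $\mathcal{C}\ge0$ on $\partial\Omega$, so by \eqref{4.2} the zeros of the curvature are exactly the points of $\partial\Omega$ where $\partial u_2/\partial n=0$. I locate them via the nodal structure: the nodal line of $u_2$ splits $\Omega$ into the two nodal domains $\{u_2>0\}$ and $\{u_2<0\}$ and meets $\partial\Omega$ at exactly two points $P_1,P_2$ (\cite{Mel}, \cite{Ale}). At each $P_i$ two distinct zero curves of $u_2$ cross, namely $\partial\Omega$ and the interior nodal line, which by the local structure of the zero set forces $\nabla u_2(P_i)=0$ and hence $\mathcal{C}(P_i)=0$. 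Conversely, on each of the two open boundary arcs cut out by $P_1,P_2$ the function $u_2$ has a strict sign on the adjacent nodal domain; since $\Omega$ is smooth and convex it satisfies the interior sphere condition, so Hopf's lemma yields $\partial u_2/\partial n\neq0$ at every such point. Therefore $\mathcal{C}$ vanishes precisely at $P_1$ and $P_2$, i.e. at exactly two points.

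The step I expect to be most delicate is the circular-arc case: manufacturing from the overdetermined data on $\Gamma$ a nontrivial solution $v=\partial_\theta u_2$ with vanishing Cauchy data, and then converting $v\equiv0$ into a clean contradiction. Care is also needed to justify that the junction points $P_i$ are genuine crossings forcing $\nabla u_2=0$, and that Holmgren and Hopf apply at all; both rely on the $C^\infty$ regularity and convexity secured in Theorem \ref{theo1}.
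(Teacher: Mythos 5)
Your proof is correct and follows essentially the same route as the paper: the overdetermined condition \eqref{4.2} combined with Hopf's lemma on each nodal domain, the Melas--Alessandrini theorem that the nodal line meets $\partial\Omega$ at exactly two points, and, for the circular arc, the rotational derivative $x\,\partial_y u_2-y\,\partial_x u_2$ with vanishing Cauchy data killed by Holmgren's theorem. The only cosmetic difference is that for the segment you apply Holmgren directly to $u_2$, whereas the paper deduces item 1 from the fact that $\partial u_2/\partial n$ can vanish at only two boundary points; your treatment of item 3 (the crossing of zero curves forcing $\nabla u_2(P_i)=0$) is in fact slightly more detailed than the paper's.
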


\begin{proof}
An easy consequence of Hopf's lemma (applied to each nodal domain) is that the normal derivative of $u_2$ only vanishes
on $\p\Om$ at points where the nodal line hits the boundary. Now, we know (see \cite{Mel} or \cite{Ale}) that there are exactly two such points. Then, the first and third items follow immediately from the ``over-determined'' condition \eqref{4.2}. The second item has already been proved in a similar situation in \cite{He-Ou}. We repeat the proof here for the sake of completeness. Let us assume that $\p\Om$ contains a piece of circle $\gamma$. According to \eqref{4.2}, $\Om$
satisfies the optimality condition
\begin{equation}\label{4.2e}
\frac{\p u_2}{\p n}\,=c\;(\mbox{constant})\quad\mbox{on }\gamma\;.
\end{equation}
We put the origin at the center of the corresponding disk and we introduce the function
$$w(x,y)=x\frac{\p u_2}{\p y} - y\frac{\p u_2}{\p x}.$$
Then, we easily verify that
$$\left\{
\begin{array}{ll}
-\Delta w=\lambda_2 w \mbox{ in }\Omega\\
w=0\mbox{ on }\gamma\\
\frac{\partial w}{\partial n}\,=0\mbox{ on }\gamma.
\end{array}\right.$$
Now we conclude, using Holmgren uniqueness theorem, that $w$ must vanish in a neighborhood of $\gamma$, so in the whole domain by analyticity. Now, it is classical that $w=0$ imply that $u_2$ is radially symmetric in $\Om$. Indeed, in polar coordinates, $w=0$ implies $\frac{\p u}{\p \theta}=0$. Therefore $\Om$ would be a disk which is impossible since it would contradict point 3.
\end{proof}

\section{The $N$-dimensional case}\label{sec3}
\subsection{Preliminaries on capacity and related modes of convergence}\label{seccap}
We will use the notion of {\it capacity} of a subset $E$ of $\R^N$, defined by
$$\cp(E)=\inf\Big\{\int_{\R^N}(|\nabla u|^2+u^2)\,dx\ :\ u\in\U_E\Big\}\,,$$
where $\U_E$ is the set of all functions $u$ of the Sobolev space $H^1(\R^N)$ such that $u\ge1$ almost everywhere in a neighbourhood of $E$. Below we summarize the main properties of the capacity and the related convergences. For further details we refer to \cite{BB} or to \cite{HP}.

If a property $P(x)$ holds for all $x\in E$ except for the elements of a set $Z\subset E$ with $\cp(Z)=0$, then we say that $P(x)$ holds {\it quasi-everywhere} (shortly {\it q.e.}) on $E$. The expression {\it almost everywhere} (shortly {\it a.e.}) refers, as usual, to the Lebesgue measure.

A subset $\Omega$ of $\R^N$ is said to be {\it quasi-open} if for every $\ep>0$ there exists an open subset $\Omega_\ep$ of $\R^N$, such that $\cp(\Omega_\ep{\scriptstyle\Delta}\Omega)<\ep$, where ${\scriptstyle\Delta}$ denotes the symmetric difference of sets. Equivalently, a quasi-open set $\Omega$ can be seen as the set $\{u>0\}$ for some function $u$ belonging to the Sobolev space $H^1(\R^N)$. Note that a Sobolev function is only defined quasi-everywhere, so that a quasi-open set $\Omega$ does not change if modified by a set of capacity zero.

In this section we fix a bounded open subset $D$ of $\R^N$ with a Lipschitz boundary, and we consider the class $\A(D)$ of all quasi-open subsets of $D$. For every $\Omega\in\A(D)$ we denote by $H^1_0(\Omega)$ the space of all functions $u\in H^1_0(D)$ such that $u=0$ {\it q.e.} on $D\setminus\Omega$, endowed with the Hilbert space structure inherited from $H^1_0(D)$. This way $H^1_0(\Omega)$ is a closed subspace of $H^1_0(D)$. If $\Omega$ is open, then the definition above of $H^1_0(\Omega)$ is equivalent to the usual one (see \cite{ah96}). For $\Omega\in\A(D)$ the linear operator $-\Delta$ on $H^1_0(\Omega)$ has discrete spectrum, again denoted by $\lambda_1(\Omega)\le\lambda_2(\Omega)\le\lambda_3(\Omega)\le\cdots.$

For $\Omega\in\A(D)$ we consider the unique weak solution $w_\Omega\in H^1_0(\Omega)$ of the elliptic problem formally written as
\begin{equation}\label{eqw}
\left\{
\begin{array}{ll}
-\Delta w=1&\hbox{in }\Omega\\
w=0&\hbox{on }\partial\Omega.
\end{array}\right.
\end{equation}
Its precise meaning is given via the weak formulation
$$\int_D\nabla w\nabla\phi\,dx=\int_D\phi\,dx\qquad\forall\phi\in H^1_0(\Omega).$$
Here, and in the sequel, for every quasi-open set of finite measure we denote by $R_\om: L^2(\R^N) \lra L^2(\R^N)$ the
operator defined by $R_\om (f)=u$, where $u$ solves equation \eqref{eqw} with the right-hand side $f$, so $w=R_\om(1)$.
Now, we introduce two useful convergences for sequences of quasi-open sets.

\begin{defi}\label{gamma}
A sequence $(\Omega_n)$ of quasi-open sets is said to $\gamma$-converge to a quasi-open set $\Omega$ if $w_{\Omega_n}\to
w_\Omega$ in $L^2(\R^N)$.
\end{defi}

\noindent The following facts about $\gamma$-convergence are known (see \cite{BB}).

(i)\ The class $\A(D)$, endowed with the $\gamma$-convergence, is a metrizable and separable space, but it is not compact.

(ii)\ The $\gamma$-compactification of $\A(D)$ can be fully characterized as the class of all {\it capacitary measures} on $D$, that are Borel nonnegative measures, possibly $+\infty$ valued, that vanish on all sets of capacity zero.

(iii)\ For every integer $k$ the map $\Omega\rightarrow\lambda_k(\Omega)$ is a map which is continuous for the $\gamma$-convergence.

To overcome the lack of compactness of the $\gamma$-convergence, it is convenient to introduce a weaker convergence.

\begin{defi}\label{wgamma}
A sequence $(\Omega_n)$ of quasi-open sets is said to $w\gamma$-converge to a quasi-open set $\Omega$ if $w_{\Omega_n}\to w$ in $L^2(\R^N)$, and $\Omega=\{w>0\}$.
\end{defi}

\noindent The main facts about $w\gamma$-convergence are the following (see \cite{BB}).

(i)\ The $w\gamma$-convergence is compact on the class $\A(D)$.

(ii)\ The $w\gamma$-convergence is weaker that the $\gamma$-convergence.

(iii)\ Every functional $F(\Omega)$ which is lower semicontinuous for $\gamma$-conver\-gen\-ce, and decreasing for set inclusion, is lower semicontinuous for $w\gamma$-convergence too. In particular, for every integer $k$, the mapping $\Omega\mapsto\lambda_k(\Omega)$ is $w\gamma$-lower semicontinuous.

(iv)\ The Lebesgue measure $\Omega\mapsto|\Omega|$ is a $w\gamma$-lower semicontinuous map.

This last property can be generalized by the following.

\begin{prop}\label{intf}
Let $f\in L^1(D)$ be a nonnegative function. Then the mapping $\Omega\mapsto\int_\Omega f\,dx$ is $w\gamma$-lower semicontinuous on $\A(D)$.
\end{prop}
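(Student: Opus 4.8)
The plan is to reduce the statement to a pointwise inequality between characteristic functions and then apply Fatou's lemma, exactly in the spirit of property (iv), which is the special case $f\equiv1$. First I would unwind the definition of $w\gamma$-convergence (Definition~\ref{wgamma}): if $\Om_n$ $w\gamma$-converges to $\Om$, then $w_{\Om_n}\to w$ in $L^2(\R^N)$ with $\Om=\{w>0\}$, so along a suitable subsequence $w_{\Om_n}\to w$ almost everywhere. Since each $w_{\Om_n}=R_{\Om_n}(1)$ is nonnegative and is strictly positive quasi-everywhere on $\Om_n$, one has $\Om_n=\{w_{\Om_n}>0\}$ up to a set of capacity zero, hence up to a Lebesgue-negligible set. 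Consequently, for almost every $x$ with $w(x)>0$ the convergence $w_{\Om_n}(x)\to w(x)>0$ forces $w_{\Om_n}(x)>0$, i.e. $\chi_{\Om_n}(x)=1$, for all $n$ large; this gives the key pointwise bound $\chi_\Om\le\liminf_n\chi_{\Om_n}$ almost everywhere.

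Next, since $f\ge0$ is fixed, I would multiply this inequality by $f$ to obtain $f\chi_\Om\le\liminf_n(f\chi_{\Om_n})$ a.e.\ (here nonnegativity of $f$ is exactly what allows $f$ to be carried inside the $\liminf$). Each $f\chi_{\Om_n}$ is nonnegative and $f\in L^1(D)$, so Fatou's lemma yields
$$\int_\Om f\,dx=\int_D f\chi_\Om\,dx\le\int_D\liminf_n(f\chi_{\Om_n})\,dx\le\liminf_n\int_D f\chi_{\Om_n}\,dx=\liminf_n\int_{\Om_n}f\,dx.$$

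The only point requiring care, and the closest thing to an obstacle, is the passage to subsequences needed to make the $\liminf$ rigorous: given an arbitrary $w\gamma$-convergent sequence, I would first extract a subsequence along which $\int_{\Om_n}f\,dx$ tends to its $\liminf$, and only then a further subsequence along which $w_{\Om_n}\to w$ a.e.; applying the computation above to this last subsequence delivers $\int_\Om f\,dx\le\liminf_n\int_{\Om_n}f\,dx$, which is precisely the asserted $w\gamma$-lower semicontinuity. Beyond this bookkeeping the argument introduces no genuinely new difficulty relative to the case $f\equiv1$ already recorded in property (iv).
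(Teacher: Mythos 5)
Your argument is correct and is essentially identical to the paper's proof: both reduce the statement to the pointwise inequality $1_\Omega\le\liminf_n 1_{\Omega_n}$ a.e.\ (obtained by passing to an a.e.-convergent subsequence of $w_{\Omega_n}$ and using that $\{w_{\Omega_n}>0\}\subset\Omega_n$ up to a null set) and then conclude by Fatou's lemma. Your extra care with the subsequence extraction and with the q.e.\ identification of $\Omega_n$ with $\{w_{\Omega_n}>0\}$ only makes explicit what the paper leaves implicit.
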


\begin{proof}
Let $(\Omega_n)$ be a sequence in $\A(D)$ that $w\gamma$-converges to some $\Omega\in\A(D)$; this means that $w_{\Omega_n}\to w$ in $L^2(\R^N)$ and that $\Omega=\{w>0\}$. Passing to a subsequence we may assume that $w_{\Omega_n}\to w$ a.e. on $D$. Suppose $x\in\Omega$ is a point where $w_{\Omega_n}(x)\to w(x)$. Then $w(x)>0$, and for $n$ large enough we have that $w_{\Omega_n}(x)>0$. Hence $x\in\Omega_n$. So we have shown that
$$1_\Omega(x)\le\liminf_{n\to+\infty}1_{\Omega_n}(x)\qquad\hbox{for {\it a.e.} }x\in D.$$
Fatou's lemma now completes the proof.
\end{proof}

The link between $w\gamma$-convergence and $L^1$-convergence is given by the following.

\begin{prop}\label{link}
Let $(A_n)$ be a sequence of quasi-open sets which $w\gamma$-converges to a quasi-open set $A$, and assume that there exist measurable sets $\Omega_n$ such that $A_n\subset\Omega_n$, and that $(\Omega_n)$ converges in $L^1$ to a measurable set $\Omega$. Then we have $|A\setminus\Omega|=0$.
\end{prop}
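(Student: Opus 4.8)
The plan is to translate the inclusion $A_n\subset\Omega_n$ into a pointwise vanishing statement for the torsion functions $w_{A_n}$, and then pass to the limit along a common almost-everywhere convergent subsequence rather than attempting to multiply the two limits directly. First I would record the consequence of the inclusion: since $w_{A_n}\in H^1_0(A_n)$, by the definition of $H^1_0$ recalled in Section~\ref{seccap} we have $w_{A_n}=0$ quasi-everywhere, hence a.e., on $D\setminus A_n$; and because $A_n\subset\Omega_n$ gives $D\setminus\Omega_n\subset D\setminus A_n$, it follows that $w_{A_n}=0$ a.e. on $D\setminus\Omega_n$. Equivalently, for every $n$,
$$w_{A_n}\,(1-1_{\Omega_n})=0\qquad\text{a.e. on }D.$$

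Next I would pass to the limit in this identity. By the definition of $w\gamma$-convergence we have $w_{A_n}\to w$ in $L^2(\R^N)$ with $A=\{w>0\}$, while the hypothesis gives $1_{\Omega_n}\to 1_\Omega$ in $L^1(D)$. Extracting a subsequence (not relabeled), I may assume both convergences hold pointwise a.e. on $D$. Taking the pointwise limit in the displayed identity yields $w\,(1-1_\Omega)=0$ a.e. on $D$, which says precisely that $w=0$ a.e. on the set $\{1_\Omega=0\}$.

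Finally I would conclude. Since $A=\{w>0\}$, the difference $A\setminus\Omega$ is contained in $\{w>0\}\cap\{1_\Omega=0\}$; but $w=0$ a.e. on $\{1_\Omega=0\}$, so this intersection has Lebesgue measure zero. Hence $|A\setminus\Omega|=0$, as claimed.

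The argument is essentially routine once the inclusion is recast as the relation $w_{A_n}(1-1_{\Omega_n})=0$. The only point deserving care — and the step I would flag as the main (if minor) obstacle — is the limit passage in a product of two sequences that converge in different topologies, one only in $L^2$ and the other only in $L^1$. This is handled cleanly by extracting a single a.e.-convergent subsequence, so that the product identity survives in the pointwise limit, instead of trying to take the limit of the product as a product of limits.
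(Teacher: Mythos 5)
Your proof is correct, and it takes a mildly different route from the paper's. The paper obtains Proposition \ref{link} as an immediate corollary of Proposition \ref{intf}: applying that lower semicontinuity result with the fixed weight $f=1_{D\setminus\Omega}$ gives $|A\setminus\Omega|\le\liminf_{n}|A_n\setminus\Omega|$, and then $|A_n\setminus\Omega|\le|A_n\setminus\Omega_n|+|\Omega_n\setminus\Omega|=|\Omega_n\setminus\Omega|\to0$ by the $L^1$ convergence. You instead give a self-contained argument, encoding the inclusion $A_n\subset\Omega_n$ as the identity $w_{A_n}(1-1_{\Omega_n})=0$ a.e. and passing to a single subsequence along which both $w_{A_n}\to w$ and $1_{\Omega_n}\to1_\Omega$ hold pointwise a.e. The underlying mechanism is the same in both cases --- a.e. convergence of the torsion functions together with $A=\{w>0\}$ is exactly what drives the proof of Proposition \ref{intf} --- but your version bypasses Fatou's lemma and handles the varying sets $\Omega_n$ directly rather than first replacing them by the fixed limit set $\Omega$. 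The paper's proof is shorter given that Proposition \ref{intf} is already on record; yours is marginally more elementary and makes the role of the inclusion hypothesis more transparent. Each step you take (vanishing q.e.\ hence a.e.\ outside $A_n$, the subsequence extraction, and the final identification $A\setminus\Omega=\{w>0\}\cap\Omega^c$) is justified.
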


\begin{proof}
By applying Proposition \ref{intf} with $f=1_{D\setminus\Omega}$ we obtain
$$|A\setminus\Omega|\le\liminf_{n\to\infty}|A_n\setminus\Omega|=
\liminf_{n\to\infty}|A_n\setminus\Omega_n|=0,$$
which concludes the proof.
\end{proof}

\subsection{A general existence result}\label{subsec3}

In this section we consider general shape optimization problems of the form
\begin{equation}\label{shopt}
\inf\big\{F(\Omega)\ :\ \Omega\subset D,\ P(\Omega)\le L\big\},
\end{equation}
where $D$ is a given bounded open subset of $\R^N$ with a Lipschitz boundary, and $L$ is a given positive real number. Finally, the cost function $F$ is a map defined on the class $\A(D)$ of admissible domains.

We assume that:
\begin{equation}\label{hypof}
\left\{\begin{array}{ll}
\hbox{$F$ is $\gamma$-lower semicontinuous on $\A(D)$;}\\
\hbox{$F$ is decreasing with respect to set inclusion.}
\end{array}\right.
\end{equation}
The functional $F$ is then $w\gamma$-lower semicontinuous by Section \ref{seccap}. Some interesting examples of functionals $F$ satisfying \eqref{hypof} are listed below.

(i) $F(\Omega)=\Phi\big(\lambda(\Omega)\big)$, where $\lambda(\Omega)$ denotes the spectrum of the Dirichlet Laplacian in $\Omega$, that is the sequence $\big(\lambda_k(\Omega)\big)$ of the Dirichlet eigenvalues, and the function $\Phi:\Rbar^\N\to\Rbar$ is lower semicontinuous and nondecreasing, in the sense that
$$\begin{array}{ll}
\lambda_k^n\to\lambda_k\quad \forall k\in\N\ \ \Rightarrow\ \
\Phi(\lambda)
\le\dis\liminf_{n\to\infty}\Phi(\lambda^n)\\
\lambda_k\le\mu_k\quad \forall k\in\N\ \ \Rightarrow\ \
\Phi(\lambda)\le\Phi(\mu).
\end{array}$$

(ii) $F(\Omega)=\cp(D\setminus\Omega)$, where $\cp$ denotes the capacity defined in Section \ref{seccap}.

(iii) $F(\Omega)=\int_D g\big(x,u_\Omega(x)\big)\,dx$, where $g(x,\cdot)$ is lower semicontinuous and decreasing on $\R$ for a.e. $x\in D$, and $u_\Omega$ denotes the solution of
$$\left\{\begin{array}{ll}
-\Delta u=f&\hbox{ in }\Omega\\
u=0&\hbox{ on }\partial\Omega\\
\end{array}\right.$$
with $f\in H^{-1}(D)$ and $f\ge0$.

In order to treat the variational problem \eqref{shopt} it is convenient to extend the definition of the functional $F$ also for measurable sets, where the notion of perimeter has a natural extension. If $M\subset\R^N$ is a measurable set of finite measure, we define
\begin{equation}\label{bh02}
\widehat{H^1_0}(M):=\{u\in H^1(\R^N)\ :\ u=0\mbox{ a.e. on }\R^N\sm M\}.
\end{equation}
For an arbitrary open set $\Om$, this definition does not coincide with the usual definition of $H^1_0(\Om)$. Nevertheless, we point out that it is not restrictive to consider this definition since, for every measurable set $M\subset\R^N$, there exists a uniquely defined quasi open set $\om$ (see for instance \cite{asbu03}) such that
$$H^1_0 (\om)= \widehat{H^1_0}(M),$$
while for a smooth set $\Om$, (e.g. Lipschitz, see \cite[Lemma 3.2.15]{HP}) the definition of the spaces $H^1_0(\Om)$ and $\widehat{H^1_0}(\Om)$ coincide.

With this notion of generalized Sobolev space, one can define 
$$\widehat F(M):=F(\omega).$$
For a nonsmooth open set $\Om$ (say with a crack), we have that $\widehat{H^1_0}(\Om)$ strictly contains $H^1_0(\Om)$, which may lead to the idea that $l_m<l_o$,
where
\begin{equation}\label{lmlo}
\begin{array}{ll}
l_m=\inf\big\{\widehat F(M)\ :\ M\subset D,\ P(M)\le L\big\}\\
l_o=\inf\big\{F(\Omega)\ :\ \Omega\subset D,\ P(\Om)\le L\big\}.
\end{array}
\end{equation} 

In practice, when solving \eqref{shopt}, the minimizing sequence will not develop cracks, precisely because by erasing a crack the generalized perimeter is unchanged and the functional decreases as a consequence of its monotonicity (it may remain constant if the crack coincide with some part of the nodal line of $lambda_2$).

Let us notice that for every measurable set $M$
\begin{equation}\label{exten}
\widehat F(M)=\inf\big\{F(A)\ :\ A\subset M\,a.e.,\ A\in\A(D)\big\}.
\end{equation}

\begin{theo}\label{existence}
There exists a finite perimeter set $M^*$ which solves the variational problem
\begin{equation}\label{shoptm}
\inf\big\{\widehat F(M)\ :\ M\subset D,\ P(M)\le L\big\}.
\end{equation}
\end{theo}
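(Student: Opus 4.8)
The plan is to run the direct method of the calculus of variations, exploiting simultaneously the compactness coming from the perimeter constraint in $BV$ and the $\wg$-compactness of $\A(D)$, and then to glue the two resulting limits together by means of Proposition \ref{link}.

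First I would take a minimizing sequence $(M_n)$ for \eqref{shoptm}, so that $M_n \sq D$, $P(M_n) \le L$, and $\widehat F(M_n) \to l_m$, where $l_m$ is the infimum defined in \eqref{lmlo}. Since all the $M_n$ lie in the fixed bounded set $D$ and have uniformly bounded perimeter, the characteristic functions $(1_{M_n})$ are bounded in $BV(\R^N)$. By the compact embedding $BV \hr L^1$ on a bounded domain, up to a subsequence $1_{M_n}\to 1_M$ in $L^1$ for some measurable set $M \sq D$, and by the lower semicontinuity of the perimeter with respect to $L^1$-convergence, $P(M)\le\liminf_{\nif}P(M_n)\le L$. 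Thus $M$ is admissible for \eqref{shoptm}, and it only remains to show $\widehat F(M)\le l_m$.

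Next I would replace each $M_n$ by a quasi-open set carrying the same value of the functional. By the very definition of $\widehat F$ there is a quasi-open set $\om_n$ with $H^1_0(\om_n)=\widehat{H^1_0}(M_n)$ and $\widehat F(M_n)=F(\om_n)$; since the functions in $\widehat{H^1_0}(M_n)$ vanish outside $M_n$, one has $\om_n\sq M_n$ up to a set of measure zero, and in particular $\om_n\in\A(D)$. Applying the $\wg$-compactness of $\A(D)$ (Section \ref{seccap}), I may then extract a further subsequence along which $\om_n$ $\wg$-converges to some quasi-open set $A\in\A(D)$. Combining the two passages to the limit, the $\wg$-lower semicontinuity of $F$ (guaranteed by \eqref{hypof} together with Section \ref{seccap}) gives
\[ F(A)\le\liminf_{\nif}F(\om_n)=\liminf_{\nif}\widehat F(M_n)=l_m, \]
while Proposition \ref{link}, applied with $\om_n\sq M_n$ and $M_n\to M$ in $L^1$, yields $|A\sm M|=0$, i.e. $A\sq M$ almost everywhere. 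Hence $A$ is admissible in the characterization \eqref{exten} of $\widehat F(M)$, which produces $\widehat F(M)\le F(A)\le l_m$. Since $M$ is admissible, $M^*:=M$ is the desired minimizer.

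The main obstacle is the mismatch between the two natural limits: the perimeter constraint controls only the $L^1$-limit $M$ of the sets, whereas the functional is controlled only along the weaker $\wg$-convergence of the associated quasi-open sets $\om_n$, and a priori these two limiting objects need not coincide. The entire argument hinges on Proposition \ref{link}, which reconciles them by forcing the $\wg$-limit $A$ of the inner approximations to be contained, up to measure zero, in the $L^1$-limit $M$; it is precisely this inclusion, together with the extension formula \eqref{exten} and the monotonicity of $F$, that allows the value $F(A)$ to bound $\widehat F(M)$ from above. A secondary point requiring care is the passage to nested subsequences, so that the $BV$ and $\wg$ extractions are compatible, together with the routine verification that each $\om_n$ genuinely belongs to $\A(D)$.
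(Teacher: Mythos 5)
Your proof is correct and follows essentially the same route as the paper's: extract an $L^1$-limit $M^*$ from the perimeter bound, pass to quasi-open sets $\om_n\subset M_n$ realizing $\widehat F(M_n)$, use $w\gamma$-compactness and Proposition \ref{link} to identify the $w\gamma$-limit as a subset of $M^*$ up to null sets, and conclude via \eqref{exten} and $w\gamma$-lower semicontinuity. No substantive differences.
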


\begin{proof}
Let $(M_n)$ be a minimizing sequence for problem \eqref{shoptm}. Since $P(M_n)\le L$ we may extract a subsequence (still denoted by $(M_n)$) that converges in $L^1$ to a set $M^*$ with $P(M^*)\le L$.

There are quasi-open sets $\omega_n\subset M_n$ a.e. such that
$$F(\omega_n)=\widehat F(M_n).$$
By the compactness of $w\gamma$-convergence we may assume that $(\om_n)$ is $w\gamma$-converging to some quasi-open set $\om$, and by Proposition \ref{link} we have $|\om \setminus M^*|=0$. Therefore, we have that
$$\widehat F(M^*)\le F(\om)\le\liminf_{n\to\infty}F(\om_n)=\liminf_{n\to\infty}\widehat F(M_n).$$
Hence $M^*$ solves the variational problem \eqref{shoptm}.
\end{proof}

The relaxed formulation we have chosen (i.e. to work with measurable sets instead of open sets or quasi-open sets) is not a restriction, provided $F$ verifies the following mild $\g$-continuity property:
\begin{equation}\label{xs12}
\om_n\in\A(D),\ \om_n\subset\om,\ \om_n\sr{\g}{\to}\om\quad\Lra\quad F(\om_n)\to F(\om).
\end{equation}
For instance, all {\it spectral functionals} $F(\Omega)=\Phi\big(\lambda(\Omega)\big)$ seen in (i) above fulfill property \eqref{xs12} provided $\Phi$ is continuous; similarly, integral functionals $F(\Omega)=\int_D g\big(x,u_\Omega(x)\big)\,dx$ seen in (iii) above fulfill property \eqref{xs12} provided $g(x,\cdot)$ is continuous and with quadratic growth.

\begin{theo}\label{equivrel}
Assume that $F$ satisfies \eqref{xs12}. Then, problems \eqref{shopt} and \eqref{shoptm} are equivalent in the sense that $l_o=l_m$, where $l_0$ and $l_m$ are defined in (\ref{lmlo}).
\end{theo}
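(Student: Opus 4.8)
The plan is to prove the two inequalities $l_m\le l_o$ and $l_o\le l_m$ separately; the first is immediate, while the second is the real content and is where hypothesis \eqref{xs12} enters.

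For $l_m\le l_o$ I would simply observe that every quasi-open competitor for \eqref{shopt} is also an admissible measurable competitor for \eqref{shoptm}: if $\Om\in\A(D)$ with $P(\Om)\le L$, then $\Om$ is in particular measurable, its De~Giorgi perimeter is unchanged, and taking $A=\Om$ in \eqref{exten} gives $\widehat F(\Om)\le F(\Om)$. Passing to the infimum yields $l_m\le l_o$. This direction uses neither \eqref{xs12} nor any fine structure.

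For $l_o\le l_m$ it suffices to show $l_o\le\widehat F(M)$ for every measurable $M\subset D$ with $P(M)\le L$. I would first invoke \eqref{exten}: the infimum there is attained at the largest quasi-open subset $\om$ of $M$ (concretely $\om=\{w_\om>0\}$, built from the torsion function), so that $\om\subset M$ a.e.\ and $F(\om)=\widehat F(M)$. If one knew $P(\om)\le L$, then $\om$ would itself be admissible for \eqref{shopt} and we would conclude $l_o\le F(\om)=\widehat F(M)$ at once. In general I would instead approximate $\om$ from inside: construct quasi-open sets $\om_n\subset\om$ with $\om_n\sr{\g}{\to}\om$ and $P(\om_n)\le L$, for instance as superlevel sets $\{w_\om>1/n\}$ (which increase to $\om$, hence $\gamma$-converge to it) or as the canonical quasi-open sets associated to inner De~Giorgi approximations of $M$. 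Hypothesis \eqref{xs12} then gives $F(\om_n)\to F(\om)=\widehat F(M)$, and since each $\om_n$ is admissible we obtain $l_o\le\liminf_n F(\om_n)=\widehat F(M)$, whence $l_o\le l_m$.

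The main obstacle is reconciling the three demands on the inner approximation $\om_n$: that it stay inside $\om$ (needed so that \eqref{xs12} applies), that it $\gamma$-converge to $\om$ (an $H^1$/energy notion, so that $F$ passes to the limit), and that it obey the perimeter bound $P(\om_n)\le L$ (a $BV$ notion). The tension between the energy convergence and the perimeter constraint is the delicate point, since perimeter is not monotone under inclusion and need not be controlled along a $\gamma$-converging sequence. I expect to resolve it by proving directly that $P(\om)\le P(M)$, equivalently that $|M\sm\om|=0$, using that the discarded set $M\sm\om$ carries no $H^1$ mass and hence cannot contribute to the De~Giorgi perimeter; this makes $\om$ itself admissible and relegates the approximation step to a safety net invoked through \eqref{xs12} only when $\om$ must be regularized.
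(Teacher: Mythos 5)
Your first inequality $l_m\le l_o$ is fine and coincides with the paper's argument. The second direction, however, has a genuine gap, located exactly where you yourself place the ``delicate point''. The resolution you propose --- proving $|M\sm\om|=0$, so that $P(\om)\le L$ and $\om$ is itself admissible for \eqref{shopt} --- is false in general, and the paper explicitly warns about this right after introducing $\om$: the measure of $M\sm\om$ may be strictly positive, and $P(\om)$ may be strictly greater than $L$. The reason is that $\widehat{H^1_0}(M)$ only sees the part of $M$ capable of supporting Sobolev functions; a compact, nowhere dense piece of $M$ of positive Lebesgue measure (a fat Cantor set in dimension one, or a thickened analogue in $\R^N$) carries no nonzero $H^1$ function vanishing a.e.\ outside it, yet it has positive measure and affects the De Giorgi perimeter of $M$. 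Your heuristic ``no $H^1$ mass $\Rightarrow$ no contribution to the perimeter'' conflates a capacitary/Sobolev notion of smallness with a $BV$ one; they are independent. The safety net fails as well: the superlevel sets $\{w_\om>1/n\}$ do increase to $\om$ and $\gamma$-converge to it, but the coarea formula only gives $P(\{w_\om>t\})<\infty$ for a.e.\ $t$, with no bound by $L$, so these sets need not satisfy the perimeter constraint and are not admissible competitors for \eqref{shopt}.

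The missing idea in the paper's proof is to approximate $M$ rather than $\om$, and to transfer the estimate by monotonicity. Mollifying the characteristic function, for suitable levels $t$ the open sets $\Om_k=\{1_M*\rho_k>t\}$ satisfy $1_{\Om_k}\to1_M$ in $L^1$ and $P(\Om_k)\to P(M)\le L$, so the perimeter constraint is under control; but lower semicontinuity only gives $\widehat F(M)\le\liminf_k F(\Om_k)$, which is the wrong direction. The decisive trick is the pointwise inequality $w\le1_M$ (after normalizing $w=R_\om(1)\le1$), which yields $\{w*\rho_k>t\}\subset\Om_k$ and hence $F(\Om_k)\le F(\{w*\rho_k>t\})$ by monotonicity of $F$; one then proves $\limsup_k F(\{w*\rho_k>t\})\le F(\{w>t\})$ by a $\gamma$-convergence argument (building recovery sequences via the Dal Maso--Murat density result) together with \eqref{xs12}, and finally lets $t\to0$ with a diagonal extraction so that $F(\{w>t\})\to F(\om)=\widehat F(M)$ while the perimeters still converge to $P(M)$. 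Without some device of this kind, which controls the perimeter through $M$ and the energy through $w$ simultaneously, the inequality $l_o\le l_m$ does not follow from your outline.
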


\begin{proof}
Clearly, $l_m \le l_o$, since for every quasi open set $\Om$ we have $H^1_0(\Om)\subset H^1_0(\om)$ where $H^1_0(\om)=\widehat{H^1_0}(\Om)$.

In order to prove the converse inequality, let $M$ be measurable such that $P(M)\le L$. There exists a quasi-open set $\om\subset M$ a.e. with
$$H^1_0(\om)=\widehat{H^1_0}(M).$$
We point out that the measure of $M\sm\om$ may be strictly positive, and that $P(\om)$ may be strictly greater than $L$.

Following the density result of smooth sets into the family of measurable sets \cite[Theorem 3.42]{afp01}, there exists a sequence
of smooth sets $\Om_n$, such that
$$\left\{\begin{array}{ll}
1_{\Om_n}\to1_M\quad\hbox{in }L^1(\R^N)\\
\Hc^{N-1} (\partial \Om_n) \lra P(M).
\end{array}\right.$$
Unfortunately, it is immediate to observe that this implies only
$$\widehat F(M)\le\liminf_\nif F(\Om_n),$$
while we are seeking precisely the opposite inequality.

We consider the function $w=R_\om (1)$, solution of the elliptic problem \eqref{eqw} in $\omega$, and a sequence $(\rho_k)_k$ of convolution kernels. As in the proof of \cite[Theorem 3.42]{afp01}, by the definition of perimeter and the coarea formula we have
$$\begin{array}{ll}
&\quad\dis P(M)=\int_{\R^N}|\nabla1_M|=\lim_{k\to\infty}\int_{\R^N}|\nabla(1_M*\rho_k)|\,dx\\
&\dis=\lim_{k\to\infty}\int_0^1 P(\{1_M*\rho_k>t\})\,dt
\ge\int_0^1\liminf_{k\to\infty}P(\{1_M*\rho_k>t\})\,dt.
\end{array}$$
For every $t\in(0,1)$ we have
$$\begin{array}{ll}
&\dis|\{1_M*\rho_k>t\}\sm M|\le\frac{1}{t}\int_{\R^N}|1_M*\rho_k - 1_M|\,dx\\
&\dis|M\sm\{1_M*\rho_k>t\}|\le\frac{1}{1-t}\int_{\R^N}|1_M*\rho_k - 1_M|\,dx,
\end{array}$$
so that $1_{\{1_M*\rho_k>t\}}$ converges in $L^1(\R^N)$ to $1_M$ and
$$\liminf_{k\to\infty}P(\{1_M*\rho_k>t\})\ge P(M).$$
The above inequalities imply that for almost every $t\in(0,1)$
$$\liminf_{k\to\infty}P(\{1_M*\rho_k>t\})=P(M).$$
For a subsequence (still denoted using the index $k$) we have
$$\left\{\begin{array}{ll}
1_{\{1_M*\rho_k>t\}}\to1_M\quad\hbox{in }L^1(\R^N)\\
P(\{1_M*\rho_k>t\})\to P(M).
\end{array}\right.$$
We notice that up to a set of zero measure $\{w>0\}\subset M$. We may assume that $w\le1$ (otherwise we consider in the sequel $\frac{w}{\|w\|_\infty}$). Then we get
$$w*\rho_k\le1_M*\rho_k$$
so
$$\{w*\rho_k>t\}\subset\{1_M*\rho_k>t\}$$
and
$$F(\{1_M*\rho_k>t\})\le F(\{w*\rho_k>t\}).$$
Let us prove that
\begin{equation}\label{ineq}
\limsup_{k\to\infty}F(\{w*\rho_k>t\})\le F(\{w>t\}).
\end{equation}
Thanks to \eqref{xs12} and to the monotonicity property, it is enough to prove that $\{w*\rho_k>t\}\cap\{w>t\}$ $\g$-converges to $\{w>t\}$, so to show (see for instance \cite[Chapters 4,5]{BB}) that for every $\vphi\in H^1_0(\{w>t\})$ there exists a sequence $\vphi_k\in H^1_0(\{w*\rho_k>t\})$ such that $\vphi_k\to\vphi$ strongly in $H^1_0$.

Using the density result of \cite{dammu}, it is enough to choose $\vphi\ge0$ such that $\vphi\le(w-t)^+$ and take
$$\vphi_k=\min\{\vphi,\big((w*\rho_k)-t\big)^+\}.$$
Being \eqref{ineq} true for every $t$, from the convergence $F(\{w>t\})\to F(\{w>0\})$ as $t\to0$, by a diagonal procedure we can choose $t_k\to0$ such that
$$\limsup_{k\to\infty}F(\{w*\rho_k>t_k\})\le
F(\{w>0\})=F(\om)=\widehat F(M),$$
and
$$\left\{\begin{array}{ll}
1_{\{1_M*\rho_k>t_k\}}\to1_M\quad\hbox{in }L^1(\R^N)\\
P(\{1_M*\rho_k>t_k\})\to P(M).
\end{array}\right.$$
This proves that $l_o\le l_m$.
\end{proof}

\subsection{The case of the second eigenvalue}
As a corollary of Theorem \ref{existence}, since any eigenvalue of the Laplace operator satisfies \eqref{hypof}, we have:

\begin{theo}\label{existlam}
Let $m=1,2,3,\cdots$, and let $D$ be a bounded open set in $\R^N$ with a Lipschitz boundary. Then the variational problem
\begin{equation}\label{e9}
\inf\big\{\lambda_m(M)\ :\ M\hbox{ measurable set in }D,\ P(M)\le L\big\}
\end{equation}
has a minimizer.
\end{theo}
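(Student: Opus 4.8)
The plan is to read this statement as an immediate corollary of Theorem \ref{existence}, applied to the cost functional $F(\Om)=\lambda_m(\Om)$ on the class $\A(D)$. The first thing I would do is check that this $F$ meets the two hypotheses in \eqref{hypof}. Monotonicity with respect to inclusion is immediate from the min--max characterization of the Dirichlet eigenvalues: if $\Om_1\subset\Om_2$ then $H^1_0(\Om_1)\subset H^1_0(\Om_2)$, so restricting the Rayleigh quotient to the smaller space can only raise the $m$-th eigenvalue, whence $\lambda_m(\Om_1)\ge\lambda_m(\Om_2)$. The $\gamma$-lower semicontinuity is in fact available for free: property (iii) of the $\gamma$-convergence recalled in Section \ref{seccap} states that $\Om\mapsto\lambda_m(\Om)$ is $\gamma$-continuous, which is exactly case (i) of the examples listed after \eqref{hypof} with $\Phi(\lambda)=\lambda_m$ (continuous and nondecreasing). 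Thus $F=\lambda_m$ satisfies \eqref{hypof}.

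The one genuinely substantive point is to match the functional $\widehat F$ that Theorem \ref{existence} minimizes with the quantity $\lambda_m(M)$ appearing in \eqref{e9}. Here I would use that, for a measurable set $M$, the Dirichlet eigenvalues are defined through the generalized space $\widehat{H^1_0}(M)$ of \eqref{bh02}, and that by the discussion preceding Theorem \ref{existence} there is a unique quasi-open set $\om$ with $H^1_0(\om)=\widehat{H^1_0}(M)$. Since these two Hilbert spaces coincide, the corresponding Laplacians have the same spectrum, so $\lambda_m(M)=\lambda_m(\om)=\widehat F(M)$. Consequently the problem \eqref{e9} is literally the problem \eqref{shoptm} for this choice of $F$.

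With this identification, Theorem \ref{existence} directly yields a finite-perimeter set $M^*\subset D$ with $P(M^*)\le L$ that minimizes $\widehat F$, hence minimizes $\lambda_m$ over measurable competitors of perimeter at most $L$, which is the assertion. I do not expect a real obstacle: all the compactness work (the $L^1$-extraction from the bounded-perimeter minimizing sequence, the $w\gamma$-compactness of the associated quasi-open sets, and the containment $|\om\sm M^*|=0$ supplied by Proposition \ref{link}) is already encapsulated in Theorem \ref{existence}. The only place demanding care is precisely the bookkeeping of the previous paragraph --- that $\lambda_m$ evaluated on a measurable set must be interpreted via $\widehat{H^1_0}(M)$ --- so that relaxing from quasi-open to measurable sets leaves the minimized value unchanged and the corollary reduces exactly to Theorem \ref{existence}.
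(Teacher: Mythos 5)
Your proposal is correct and follows exactly the paper's route: the paper states Theorem \ref{existlam} as an immediate corollary of Theorem \ref{existence}, noting only that every Dirichlet eigenvalue satisfies \eqref{hypof}. Your additional bookkeeping --- that $\lambda_m(M)$ for measurable $M$ is defined via $\widehat{H^1_0}(M)$ so that \eqref{e9} is literally \eqref{shoptm} --- is exactly the identification the paper leaves implicit.
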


In this section, we will show that for the second eigenvalue, one can improve the previous Theorem. Actually, we will prove that minimizers exist if $D$ is replaced by all of $\R^N$. The proof relies on a concentration-compactness argument.

\begin{theo}\label{bh07}
Problem
\begin{equation}\label{bh03}
\inf\big\{\lambda_2(M)\ :\ M\mbox{ measurable set in }\R^N,\ P(M)\le L\big\}:=l_m,
\end{equation}
has a solution.
\end{theo}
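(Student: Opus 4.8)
The plan is to run a concentration--compactness argument on a minimizing sequence, using the existence result in bounded containers (Theorem \ref{existlam}) as the ``compact'' building block. First I would record that $l_m$ is finite (test a single ball) and that any minimizing sequence $(M_n)$ has uniformly bounded measure: by the isoperimetric inequality $P(M_n)\le L$ forces $|M_n|\le C_N L^{N/(N-1)}$, while, since $\lambda_2$ scales like $|M|^{-2/N}$ under dilations, Faber--Krahn rules out $|M_n|\to0$; hence $|M_n|\to m_0>0$ along a subsequence. To each $M_n$ I attach its torsion function $w_n=R_{M_n}(1)$, which is bounded in $H^1(\R^N)\cap L^\infty(\R^N)$ with bounds depending only on $m_0$ and $N$.

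Then I would apply the concentration--compactness principle to the densities $|\nabla w_n|^2+w_n^2$ (equivalently to $1_{M_n}$), producing the usual three alternatives. \emph{Vanishing} is excluded: if $\sup_y\int_{B(y,1)}(|\nabla w_n|^2+w_n^2)\to0$, the sets become locally negligible and $\lambda_1(M_n)\to+\infty$, hence $\lambda_2(M_n)\to+\infty$, contradicting $l_m<\infty$. In the \emph{compactness} case there are translations $\tau_n$ making $1_{M_n(\cdot+\tau_n)}$ tight; passing to the $L^1$-limit produces a measurable $M^*$ with $P(M^*)\le L$ by lower semicontinuity of the perimeter, while the translated torsion functions $w\gamma$-converge. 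Arguing exactly as in Theorem \ref{existence} (the functional $\widehat{\lambda_2}$ is $w\gamma$-lower semicontinuous, by Section \ref{seccap}, and $\lambda_2$ is translation invariant) gives $\widehat{\lambda_2}(M^*)\le\liminf_n\lambda_2(M_n)=l_m$, so $M^*$ is a minimizer.

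The delicate case is \emph{dichotomy}, where the mass splits into two lumps $M_n^1,M_n^2$ whose mutual distance tends to infinity. Here I would first choose the separating cut inside an annulus on which $1_{M_n}$ and $|\nabla w_n|$ are small (a Fubini/averaging argument over the cutoff radius), so that $P(M_n^1)+P(M_n^2)\le P(M_n)+o(1)\le L+o(1)$, i.e. no perimeter is created asymptotically. Since the two pieces are arbitrarily far apart, the two lowest eigenvalues of $M_n^1\sqcup M_n^2$ are, up to $o(1)$, the two smallest numbers in the combined list $\{\lambda_k(M_n^1)\}\cup\{\lambda_k(M_n^2)\}$. Consequently $l_m$ is bounded below by the infimum of $\max\big(\lambda_1(A),\lambda_1(B)\big)$ over disjoint pairs with $P(A)+P(B)\le L$ (the competing possibility, that a single lump carries both eigenvalues, is a one-piece problem and reduces to the compactness case). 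Writing $I_1(\ell)=\inf\{\lambda_1(A):P(A)\le\ell\}$, which is achieved by a ball and is decreasing in $\ell$, this two-piece value equals $\min_{\ell_1+\ell_2\le L}\max\big(I_1(\ell_1),I_1(\ell_2)\big)$, attained at $\ell_1=\ell_2=L/2$; thus two equal balls placed far apart form an admissible measurable set realizing $l_m$. Hence in every alternative the infimum is attained.

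I expect the main obstacle to be precisely the dichotomy bookkeeping: simultaneously (i) localizing the cut so that the perimeter constraint is asymptotically preserved, and (ii) passing to the limit in the eigenvalue through the relaxed $w\gamma$-framework of Section \ref{seccap} while keeping track of the two escaping translations and the fact that the admissible objects are only measurable sets.
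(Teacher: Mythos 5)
Your proposal follows the same overall strategy as the paper (concentration--compactness applied to a minimizing sequence, with the torsion functions/resolvents as the compactness vehicle, lower semicontinuity of the perimeter under $L^1$ convergence, and $w\gamma$/monotonicity arguments in the compact case), but it diverges genuinely in the dichotomy branch. The paper \emph{excludes} dichotomy: it shows that this alternative would force the optimum to be two disjoint equal balls, and then invokes the external result of \cite{IVdB} (two slightly overlapping balls, rescaled to the same perimeter, have strictly smaller $\lambda_2$) to reach a contradiction. You instead turn dichotomy into a constructive conclusion: if the mass splits, then $l_m\ge\min_{\ell_1+\ell_2\le L}\max\big(I_1(\ell_1),I_1(\ell_2)\big)=I_1(L/2)$, which is attained by two far-apart equal balls, so a minimizer exists in that branch too. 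This is logically sound as a proof by cases (the branch may well be vacuous, but that does not matter for existence) and has the advantage of being self-contained, not relying on the forthcoming paper \cite{IVdB}; the paper's route buys the extra qualitative information that the dichotomy alternative cannot actually occur. One point you should repair: your dismissal of the sub-case in which a single lump carries both eigenvalues as ``reducing to the compactness case'' is not a valid justification --- that lump could itself undergo dichotomy again. The correct exclusion, and the one the paper uses, is a scaling argument: since the other lump has measure bounded below, the isoperimetric inequality forces the surviving lump to have perimeter at most $L-\delta$ for some $\delta>0$; dilating it by a factor $t=\big(L/(L-\delta)\big)^{1/(N-1)}>1$ keeps it admissible and multiplies $\lambda_2$ by $t^{-2}<1$, giving $l_m\le t^{-2}l_m$, a contradiction. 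With that substitution your argument is complete.
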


\begin{proof}
Let $(M_n)$ be a minimizing sequence for problem \eqref{bh03}. We shall use a concentration compactness argument for the resolvent
operators (see \cite[Theorem 2.2]{bujde00}). Let $\om_n$ be the quasi-open sets such that $\om_n\subset M_n$ a.e. and $\lambda_2(\om_n)=\lambda_2(M_n)$. From the classical isoperimetric inequality, the measures of $\om_n$ are uniformly bounded. Consequently, for a subsequence (still denoted using the same index) two situations may occur: compactness and dichotomy.

For a positive Borel measure $\mu$, vanishing on sets with zero capacity, we define (see \cite{BB}) by $R_\mu(f)$ the solution of the elliptic problem
$$-\Delta u+\mu u=f\quad\mbox{in }\R^N,\quad u\in H^1(\R^N)\cap L^2(\mu).$$
If the compactness issue holds, there exists a measure $\mu$ and a sequence of vectors $y_n\in\R^N$ such that the resolvent operators $R_{\om_n+y_n}$ converge strongly in $\Lin(L^2(\R^N))$ to $R_\mu$. Since the perimeters of $M_n$ are uniformly bounded, we can define (up to subsequences) the sets $M^k$ as the limits of $M_n\cap B(0,k)$, and $M=\cup_kM^k$.

Since $w_{\om_n+y_n}$ converges strongly in $L^2(\R^N)$ to $w_\mu$, we get that $\{w_\mu>0\}\subset M$ a.e. so that
$$\lambda_2(M)\le\lambda_2(\{w_\mu>0\})\le\lambda_2(\mu)=\lim_\nif\lambda_2(\om_n).$$
On the other hand
$$P(M)=\lim_{k\to\infty}P(M\cap B(0,k))\le\liminf_{k\to\infty}P(M_k),$$
so that $M$ is a solution to problem \eqref{bh03}.

Let us assume that we are in the dichotomy issue. There exists two sequences of quasi open sets such that
$$\left\{\begin{array}{ll}
\om_n^1\cup\om_n^2\subset\om_n\\
\om_n^1=\om_n\cap B(0,R_n^1),\quad\om_n^2=\om_n\cap(\R^N\sm B(0,R_n^2)),\quad R_n^2-R_n^1\to+\infty\\
\liminf_\nif|\om_n^i|>0,\quad i=1,2\\
R_{\om_n}-R_{\om_n^1}-R_{\om_n^2}\to0\quad\mbox{in }\Lin(L^2(\R^N)).
\end{array}\right.$$
Let us denote
$$M_n^1=M_n\cap B(0,R_n^1),\quad M_n^2=M_n\cap(\R^N\sm B(0,R_n^2)).$$
Since the measures of $M_n$ are uniformly bounded, one can suitably increase $R_n^1$ and decrease $R_n^2$ such that
$$\limsup_\nif P(M_n^1\cup M_n^2)\le L$$
and all other properties of the dichotomy issue remain valid.

We have that $|\lambda_2(\om_n)-\lambda_2(\om_n^1\cup\om_n^2)|\to0$. Since $\om_n^1$ and $\om_n^2$ are disconnected, up to switching the notation $\om_n^1$ into $\om_n^2$, either $\lambda_2(\om_n)$ equals $\lambda_2(\om_n^1)$ or $\lambda_1(\om_n^2)$. The first situation is to be excluded since this implies that $M_n^1$ is a minimizing sequence with perimeter less than or equal to some constant $\alpha<L$, which is absurd. The second situation leads to an optimum consisting on two disjoint balls (this is a consequence of the Faber-Krahn isoperimetric inequality for the first eigenvalue) which is impossible as mentioned in Section \ref{secconnec} below.
\end{proof}

\section{Further remarks and open questions}\label{further}
\subsection{Regularity}
We have proved, in any dimension, the existence of a {\it relaxed} solution, that is a measurable set with finite perimeter. A further step would consist in proving that this minimizer is regular (for example $C^\infty$ as it happens in two dimensions).
It seems to be a difficult issue, in particular because the eigenfunction is not positive, see for example \cite{Bri}, \cite{br-la}. Actually, apart the two-dimensional case, at present we do not even know if optimal domains are open sets.

For a similar problem with perimeter penalization the regularity of optimal domains has been proved in \cite{caf}.

\subsection{Symmetry}
\begin{figure}[h!]
\begin{center}
\scalebox{.4}{\includegraphics{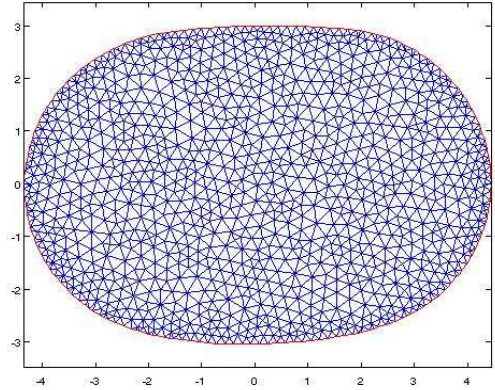}}
\caption{A possible minimizer obtained numerically (by courtesy of Edouard
Oudet)\label{figedouard}}
\end{center}
\end{figure}
Numerical simulations, see Figure \ref{figedouard} show that minimizers in two dimensions should have two axes of symmetry (one of these containing the nodal line), but we were unable to prove it. If one can prove that there is a first axis of symmetry which contains the nodal line, the second axis of symmetry comes easily by Steiner symmetrization.

In higher dimensions, we suspect the minimizer to have a cylindrical symmetry and to be not convex. Indeed, assuming $C^2$ regularity, one can find the same kind of optimality condition as in \eqref{4.2} with the mean curvature instead of the curvature. Since, the gradient of $u_2$ still vanishes where the nodal surface hits the boundary, the mean curvature has to vanish and, according to the cylindrical symmetry, it is along a circle. Therefore, one of the curvatures has to be negative at that point.

\subsection{Higher eigenvalues}
The existence of an optimal domain for higher order eigenvalues under a perimeter constraint is only available when a geometric constraint $\Omega\subset D$ is imposed; we conjecture the existence of an optimal domain also when $D$ is replaced by $\R^N$ but, at present, a proof of this fact is still missing.

\subsection{Connectedness}\label{secconnec}
We believe that optimal domains for problem 
\begin{equation}\label{bh03a}
\inf\big\{\lambda_k(\Om)\ :\ \Om \mbox{ open set in }\R^N,\ {\cal{H}}^{N-1}(\partial\Om)\le L\big\}
\end{equation}
are connected for every $ k$ and every dimension $N$. Actually, for $k=2$, this result is  proved in the forthcoming paper \cite{IVdB}. The idea of the proof consists, first, to show that in the disconnected case, the domain should be the union of two identical balls. Then, a perturbation argument is used: it is shown that the union of two slightly intersecting open balls gives a lower second eigenvalue (keeping the perimeter fixed by dilatation) than two disconnected balls.

\bigskip\noindent
{\bf Acknowledgements.} We wish to thank our colleague Michiel van den Berg for the several stimulating discussions we had on the topic. 
\bigskip


\begin{thebibliography}{99}

\bibitem{ah96}{\sc D.~R.~Adams, L.~I.~Hedberg}, Function Spaces and Potential Theory, Springer-Verlag, Berlin 1996.

\bibitem{Ale}{\sc G.~Alessandrini}, {\it Nodal lines of eigenfunctions of the fixed membrane problem in general convex domains}, Comment. Math. Helv., {\bf69} (1994), no. 1, 142--154.

\bibitem{afp01}{\sc L.~Ambrosio, N.~Fusco, D.~Pallara}, Functions of Bounded Variation and Free Discontinuity Problems, Oxford Mathematical Monographs, The Clarendon Press, Oxford University Press, New York 2000.

\bibitem{Ash-Ben}{\sc M.S.~Ashbaugh, R.D.~Benguria}, {\it Isoperimetric inequalities for eigenvalues of the Laplacian}, Spectral Theory and Mathematical Physics: a Festschrift in honor of Barry Simon's 60th birthday, Proc. Sympos. Pure Math., {\bf76} Part 1, Amer. Math. Soc., Providence 2007, 105--139.

\bibitem{asbu03}{\sc M.S.~Ashbaugh, D.~Bucur}, {\it On the isoperimetric inequality for the buckling of a clamped plate.} Z. Angew. Math. Phys., {\bf54} (2003), no. 5, 756--770.

\bibitem{caf}{\sc I.~Athanasopoulos, L.A.~Caffarelli, C.~Kenig, S.~Salsa}, {\it
An area-Dirichlet integral minimization problem}, Comm. Pure Appl. Math., {\bf 54}
(2001), 479--499. 

\bibitem{BBBS}{\sc Z.~Belhachmi, D.~Bucur, G.~Buttazzo, J.M.~Sac-Ep\'ee}, {\it Shape optimization problems for eigenvalues of elliptic operators}, ZAMM Z. Angew. Math. Mech., {\bf86} (2006), no. 3, 171--184.

\bibitem{Bri}{\sc T.~Brian\c{c}on}, {\it Regularity of optimal shapes for the Dirichlet's energy with volume constraint}, ESAIM: COCV, {\bf10} (2004), 99--122.

\bibitem{BHP}{\sc T.~Brian\c{c}on, M.~Hayouni, M.~Pierre}, {\it Lipschitz continuity of state functions in some optimal shaping}, Calc. Var. Partial Differential Equations, {\bf23} (2005), no. 1, 13--32.

\bibitem{br-la}{\sc T.~Brian\c{c}on, J.~Lamboley}, {\it Regularity of the optimal shape for the first eigenvalue of the Laplacian with volume and inclusion constraints}, Ann. IHP Anal. Non Lin\'eaire, (to appear).

\bibitem{bujde00}{\sc D.~Bucur}, {\it Uniform concentration-compactness for Sobolev spaces on variable domains.} J. Differential Equations, {\bf162} (2000), no. 2, 427--450.

\bibitem{BB}{\sc D.~Bucur, G.~Buttazzo}, Variational Methods in Shape Optimization Problems, Progress in Nonlinear Differential Equations and Their Applications {\bf65}, Birkh\"auser Verlag, Basel 2005.

\bibitem{CL}{\sc A.~Chambolle, C.~Larsen}, {\it$C^\infty$ regularity of the free boundary for a two-dimensional optimal compliance problem}, Calc. Var. Partial Differential Equations, {\bf18} (2003), no. 1, 77--94.

\bibitem{cox3}{\sc S.J.~Cox}, {\it Extremal eigenvalue problems for the Laplacian}, Recent advances in partial differential
equations (El Escorial, 1992), RAM Res. Appl. Math. {\bf30}, Masson, Paris, 1994, 37--53.

\bibitem{dammu}{\sc G.~Dal Maso, F.~Murat} {\it Asymptotic behaviour and correctors for Dirichlet problems in perforated domains with homogeneous monotone operators.} Ann. Scuola Norm. Sup. Pisa Cl. Sci., (4) {\bf24} (1997), no. 2, 239--290.

\bibitem{GH}{\sc D.~Gilbarg, L.~H\"ormander}, {\it Intermediate Schauder estimates}, Arch. Rational Mech. Anal., {\bf74} (1980), no. 4, 297--318.

\bibitem{Gil-Tru}{\sc D.~Gilbarg, N.S.~Trudinger}, Elliptic Partial Differential Equations of Second Order. Reprint of the 1998 edition, Classics in Mathematics, Springer-Verlag, Berlin 2001.

\bibitem{Giu}{\sc E.~Giusti}, Minimal Surfaces and Functions of Bounded Variation, Monographs in Mathematics {\bf80}, Birkh\"auser Verlag, Basel 1984.

\bibitem{Gri}{\sc P.~Grisvard}, Elliptic Problems in Nonsmooth Domains, Pitman, London 1985.

\bibitem{H}{\sc A.~Henrot}, Extremum Problems for Eigenvalues of Elliptic Operators, Birkh\"auser Verlag, Basel 2006.

\bibitem{He-Ou}{\sc A.~Henrot, E.~Oudet}, {\it Minimizing the second eigenvalue of the Laplace operator with Dirichlet boundary
conditions}, Arch. Rational Mech. Anal., {\bf169} (2003), 73--87.

\bibitem{HP}{\sc A.~Henrot, M.~Pierre}, Variation et Optimisation de Formes, Math\'ematiques et Applications {\bf48}, Springer-Verlag, Berlin 2005.

\bibitem{IVdB}{\sc M. Iversen, M.~van den Berg}, {\it On the minimization of Dirichlet eigenvalues of the Laplace operator}, 
Preprint, available at {\tt http://arxiv.org} as article-id 0905.4812.

\bibitem{kebook}{\sc S.~Kesavan,} {\it Symmetrization \& applications.} Series in Analysis {\bf3}, World Scientific Publishing, Hackensack 2006.

\bibitem{Kra2}{\sc E.~Krahn,} {\it \"Uber Minimaleigenschaften der Kugel in drei un mehr Dimensionen}, Acta Comm. Univ. Dorpat., {\bf A9} (1926), 1--44.

\bibitem{lin}{\sc C.S.~Lin}, {\it On the second eigenfunctions of the Laplacian in $\mathbb{R}^2$}, Comm. Math. Phys., {\bf111} no. 2 (1987), 161--166.

\bibitem{Mel}{\sc A.~Melas}, {\it On the nodal line of the second eigenfunction of the Laplacian in $\mathbb{R}^2$}, J. Diff. Geometry, {\bf35} (1992), 255--263.

\bibitem{Pay2}{\sc L.E.~Payne}, {\it Isoperimetric inequalities and their applications}, SIAM Rev., {\bf9} (1967), 453--488.

\bibitem{Rou}{\sc B.~Rousselet}, {\it Shape design sensitivity of a membrane}, J. Opt. Theory and Appl., {\bf40} (1983), 595--623.

\bibitem{vdB}{\sc M.~van den Berg}, {\it On condensation in the free-boson gas and the spectrum of the Laplacian}, J. Statist. Phys., {\bf31} (1983), no. 3, 623--637.

\bibitem{Yau2}{\sc S.T.~Yau} {\it Open problems in geometry. Differential geometry: partial differential
equations on manifolds}, Proc. Sympos. Pure Math. (Los Angeles, CA, 1990), {\bf54} Part 1, Amer. Math. Soc., Providence 1993, 1--28.

\end{thebibliography}
\end{document}